\DeclareMathAlphabet\oldmathcal{OMS}        {cmsy}{b}{n}
\SetMathAlphabet    \oldmathcal{normal}{OMS}{cmsy}{m}{n}
\DeclareMathAlphabet\oldmathbcal{OMS}       {cmsy}{b}{n}
\newtheorem{theorem}{Theorem}[section]
\newtheorem{lemma}[theorem]{Lemma}
\newtheorem{proposition}[theorem]{Proposition}
\newtheorem{corollary}[theorem]{Corollary}
\newtheorem{def/prop}[theorem]{Definition/Proposition}
\newtheorem{conjecture}[theorem]{Conjecture}
\theoremstyle{definition}
\newtheorem{definition}[theorem]{Definition}
\newtheorem{remark}[theorem]{Remark}
\newtheorem*{ack}{Acknowledgements}
\newtheorem{example}{Example}[section]
\DeclareSymbolFont{bbold}{U}{bbold}{m}{n}
\DeclareSymbolFontAlphabet{\mathbbold}{bbold}
\def\BOne{\mathchoice{\scalebox{1.16}{$\displaystyle\mathbbold 1$}}{\scalebox{1.16}{$\textstyle\mathbbold 1$}}{\scalebox{1.16}{$\scriptstyle\mathbbold 1$}}{\scalebox{1.16}{$\scriptscriptstyle\mathbbold 1$}}}
\def\fract#1#2{\raise4pt\hbox{$ #1 \atop #2 $}}
\def\bbc{{\mathbb C}}
\def\bbp{{\mathbb P}}
\def\bbq{{\mathbb Q}}
\def\bbr{{\mathbb R}}
\def\bbt{{\mathbb T}}
\def\bbz{{\mathbb Z}}
\def\gra{\alpha}
\def\grb{\beta}
\def\grg{\gamma}
\def\gro{\omega}
\def\grs{\sigma}
\def\grt{\tau}
\def\grD{\Delta}
\def\bfl{{\bf l}}
\def\bfm{{\bf m}}
\def\bfn{{\bf n}}
\def\bfv{{\bf v}}
\def\bfw{{\bf w}}
\def\calb{{\mathcal B}}
\def\cald{{\mathcal D}}
\def\cale{{\mathcal E}}
\def\calh{{\mathcal H}}
\def\cali{{\mathcal I}}
\def\calj{{\mathcal J}}
\def\calk{{\mathcal K}}
\def\calo{{\mathcal O}}
\def\cals{{\oldmathcal S}}
\def\gt{{\mathfrak t}}
\def\gu{{\mathfrak u}}
\def\gA{{\mathfrak A}}
\def\lcm{{\rm lcm}}
\def\tf{\tilde{f}}
\def\<{\langle}
\def\>{\rangle}
\def\ra#1{\to}
\def\fract#1#2{\raise4pt\hbox{$ #1 \atop #2 $}}
\def\decdnar#1{\phantom{\hbox{$\scriptstyle{#1}$}}
\left\downarrow\vbox{\vskip15pt\hbox{$\scriptstyle{#1}$}}\right.}
\def\lcm{{\rm lcm}}
\def\hook{\mathbin{\hbox to 6pt{%
                 \vrule height0.4pt width5pt depth0pt
                 \kern-.4pt
                 \vrule height6pt width0.4pt depth0pt\hss}}}
\begin{document}

\title{Iterated $S^3$ Sasaki Joins and Bott Orbifolds}

\author[Charles Boyer]{Charles P. Boyer}
\address{Charles P. Boyer, Department of Mathematics and Statistics,
University of New Mexico, Albuquerque, NM 87131.}
\email{cboyer@math.unm.edu} 
\author[Christina T{\o}nnesen-Friedman]{Christina W. T{\o}nnesen-Friedman}
\address{Christina W. T{\o}nnesen-Friedman, Department of Mathematics, Union
College, Schenectady, New York 12308, USA }
\email{tonnesec@union.edu}
\thanks{The authors were partially supported by grants from the
Simons Foundation, CPB by (\#519432), and CWT-F by (\#422410)}
\date{\today}

\maketitle
\vspace{-7mm}



\section*{Introduction}\label{intro}
It is well known that to a compact quasi-regular Sasaki manifold (orbifold) one can uniquely associate a projective algebraic variety with an orbifold K\"ahler structure. To understand this relation better it behooves us to think in categorical terms as a functor between the category of compact quasi-regular Sasaki manifolds (orbifolds) and the category of projective algebraic varieties with an orbifold K\"ahler structure. Here we restrict ourselves to the case where the objects in the Sasaki category are what we call {\it iterated $S^3_\bfw$ Sasaki joins} which are obtained by iterating the $S^3_\bfw$ join construction described in \cite{BoTo19a}. The objects in the K\"ahler category are {\it orbifold Bott towers} which are Bott towers \cite{GrKa94,BoCaTo17} with an orbifold structure on the invariant divisors. At first glance one might expect a map from the Sasaki cone to the K\"ahler cone, but there is no such map. What there is, is a map of topoi of groupoids, that is a functor, and it is the purpose of this note to investigate this relationship with the goal of producing constant scalar curvature (cscS) Sasaki metrics. While Fano Bott manifolds are relatively sparse \cite{BoCaTo17,Suy18}, it is not so for Bott orbifolds. This then gives rise to constant scalar curvature Sasaki metrics on $S^3_\bfw$ iterated joins. However, we shall see that as the dimension grows it becomes increasingly complicated to obtain examples of cscS metrics on {\it smooth} iterated joins. Nevertheless, we certainly believe that there are many examples. We prove that there are infinitely many smooth nontrivial SE examples up through dimension 11 and conjecture it to be true in arbitrary dimension.

Since the category of Bott orbifold towers is somewhat easier to understand than the category of iterated $S^3$ joins, we begin with the former.

\begin{ack}
This paper is the basis of a planned talk by the second author at the upcoming conference 	
{\it AMAZER: Analysis of Monge-Amp\`ere, a tribute to Ahmed Zeriahi} at the Institute of Mathematics of Toulouse (June 2021\footnote{Originally the conference was planned for June 2020, but due to the Covid-19 pandemic, the conference is postponed to June 2021.}). The authors would like to take this opportunity to wish Professor Ahmed Zeriahi's  a very happy retirement. The second author would also like to thank the organizers for the invitation.
\end{ack}

\section{Orbifold Bott Towers}
As we shall see the quotient orbifolds produced by iterating the $S^3_\bfw$ join construction have the form of Bott orbifolds that we now describe. Before embarking on our journey we present a brief review of Bott manifolds.

\subsection{The Construction of Bott Towers}
We recall the definition of a Bott tower as defined by Grossberg and Karshon \cite{GrKa94} (see also \cite{BoCaTo17}): We call $M_k$ the \emph{stage $k$ Bott manifold} of the \emph{Bott tower of
  height $n$}:
\begin{equation}\label{Botttowereqn}
M_n \xrightarrow{\pi_n}M_{n-1} \xrightarrow{\pi_{n-1}} \cdots M_2
\xrightarrow{\pi_2} M_1 =\bbc\bbp^1 \xrightarrow{\pi_1} \mathit{pt}
\end{equation}
where $M_k$ is defined inductively as the total space of the projective bundle 
\begin{equation}\label{projbun}
\bbp(\BOne\oplus L_k)\xrightarrow{\pi_k}M_{k-1}
\end{equation} 
with fiber $\bbc\bbp^1$, and some holomorphic line bundle $L_k$ on $M_{k-1}$.
At each stage we have \emph{zero} and \emph{infinity sections} $\grs_k^0\colon
M_{k-1}\to M_k$ and $\grs_k^\infty\colon M_{k-1}\to M_k$ which respectively
identify $M_{k-1}$ with $\bbp(\BOne \oplus 0)$ and $\bbp(0\oplus L_k)$.
We consider these to be part of the structure of the Bott tower
$(M_k,\pi_k,\grs_k^0,\grs_k^\infty)_{k=1}^n$.

Bott towers $M_n(A)$ of dimension $n$ form the objects of a groupoid that are in one-to-one correspondence with the set of $n$ by $n$ lower triangular unipotent matrices $A=(A^j_i)$; hence, they are in one-to-one correspondence with  the set $\bbz^{\frac{n(n-1)}{2}}$. Our convention here is that $j$ labels the columns of $A$ and $i$ the rows. Note that Bott manifolds  are smooth toric varieties. A Bott manifold $M_n(A)$ has a set $\{D_{v_i},D_{u_i}\}_{i=1}^n$ of $\bbt^n$ invariant pairs of divisors  which are just the zero and infinity
sections of $\pi_i\colon \bbp(\BOne\oplus L_i)\to M_{i-1}$. That is, $D_{v_i}=\grs_i^0(M_{i-1})$ and $D_{u_i}=\grs_i^\infty (M_{i-1})$. 
respectively. The divisor classes $[D_{v_i}],[D_{u_i}]$ are elements of the Chow group $A_{n-1}(M_n(A))$ and we denote the Poincar\'e duals in $H^2(M_n(A),\bbz)$ by $y_i,x_i$, respectively. A $\bbt^n$ invariant basis of $A_{n-1}(M_n(A))$ (correspondingly $H^2(M_n(A),\bbz)$) is obtained by choosing one element from each of the invariant pairs $\{[D_{v_i}],[D_{u_i}]\}_{i=1}^n$ (correspondingly $\{y_i,x_i\}_{i=1}^n$). We note that $x_1=y_1$ (equivalently $D_{v_1}\sim D_{u_1}$), so there are at most $2^{n-1}$ invariant bases.

Let us recall the quotient construction of Bott manifolds in \cite{GrKa94,BoCaTo17}: A stage $n$ Bott manifold $M_n$ can be written as a quotient of $n$ copies of
$\bbc^2_*:=\bbc^2\setminus\{\bf0\}$ by a complex $n$-torus $(\bbc^*)^n$. To
see this, consider the action of $(t_i)_{i=1}^n\in (\bbc^*)^n$ on
$(z_j,w_j)_{j=1}^n\in (\bbc^2_*)^n$ by
\begin{equation}\label{Tnact}
(t_i)_{i=1}^n\colon (z_j,w_j)_{j=1}^n\mapsto
\Bigl(t_jz_j,\Bigl(\prod_{i=1}^{n}t_i^{A^i_j}\Bigr) w_j\Bigr)\,\strut_{j=1}^n,
\end{equation}
where $A$ is a lower triangular unipotent integer-valued matrix
\begin{equation}\label{Amatrix}
A=\begin{pmatrix}
1   & 0 &\cdots  & 0 &0 \\
A^1_2& 1    &\cdots &0  &0\\
\vdots&\vdots&\ddots&\vdots& \vdots\\
A^1_{n-1}& A^2_{n-1}& \cdots& 1 &0\\
A^1_n   & A^2_n & \cdots & A^{n-1}_n &  1
\end{pmatrix}, \qquad A^i_j\in\bbz.
\end{equation}
Since the induced action of $(\bbr^+)^n$ is transverse to $(S^3)^n$, where
$S^3$ is the unit sphere in $\bbc^2_*$, the orbits of this action are in
bijection with orbits of the induced free $(S^1)^n$ action on $(S^3)^n$, and
the geometric quotient is a compact complex $n$-manifold $M_n(A)$.

\subsection{The Category of Bott Orbifold Towers}
We employ the notation of \cite{BoTo19a} by writing $D_{u_i^0},D_{u_i^\infty}$ instead of $D_{v_i},D_{u_i}$, and construct Bott orbifold towers simply by putting an orbifold structure on the invariant divisors $D_{u_i^0},D_{u_i^\infty}$ to make them branch divisors with ramification indices $m_i^0,m_i^\infty$, respectively. 
\begin{definition}\label{Bottorbtow}
A {\it Bott orbifold tower of  height $n$} is a Bott tower of orbifolds of the form
\begin{equation}\label{Bottorbtowereqn}
(M_n(A),\grD^T_{\bfm_n}) \xrightarrow{\pi_n}(M_{n-1}(A),\grD^T_{\bfm_{n-1}}) \xrightarrow{\pi_{n-1}} \cdots (M_2(A),\grD^T_{\bfm_2})
\xrightarrow{\pi_2} (M_1 =\bbc\bbp^1,\grD_{\bfm_1}) \xrightarrow{\pi_1} (\mathit{pt},\emptyset)
\end{equation}
where Bott orbifold towers are log pairs $(M_n(A),\grD^T_\bfm)$ with
\begin{equation}\label{Bottbranchdiv}
\grD^T_{\bfm_n}=\sum_{i=1}^n\grD_{\bfm_i}=\sum_{i=1}^n\Bigl(\bigl(1-\frac{1}{m_i^0}\bigr)D_{u^0_i} +\bigl(1-\frac{1}{m_i^\infty}\bigr)D_{u^\infty_i}\Bigr)
\end{equation} 
and $(M_i,\grD^T_i)$ is the log pair associated to the total space of the projective orbibundle 
$$\bbp_\bfm(\BOne\oplus L_i)\xrightarrow{\pi_i}(M_{i-1},\grD^T_{\bfm_{i-1}})$$ 
with fiber $\bbc\bbp^1(v_i^0,v_i^\infty)/\bbz_{m_i}$ for some holomorphic line orbibundle $L_i$ on $M_{i-1}$ where $\bfm_i=(m_i^0,m_i^\infty)=m_i(v_i^0,v_i^\infty)$ with $v_i^0,v_i^\infty$ relatively prime, so $m_i=\gcd(m_i^0,m_i^\infty)$. 
The set of Bott orbifold towers of height $n$ forms a groupoid whose objects $\calb\calo^n_0$ are Bott orbifold towers $(M_n(A),\grD_{\bfm_n}^T)$ and whose morphisms $\calb\calo^n_1$ are orbifold biholomorphisms. The quotient stack $\calb\calo^n_0/\calb\calo^n_1$ is bijective to the set of biholomorphism classes of Bott orbifolds $\{(M_n(A),\grD_{\bfm_n}^T)\}$. The {\it orbit of $\calb\calo^n_1$ through $(M_n(A),\grD_{\bfm_n}^T)$} is the set of all Bott tower orbifolds that are equivalent to $(M_n(A),\grD_{\bfm_n}^T)$. 
\end{definition}

All categories in this paper are small. The equivalences that are of most interest to us in this paper are those  induced by the fiber inversion maps as described in \cite{BoCaTo17,BoTo19a}. Such Bott tower orbifolds take the form $(M_n(A'),\grD_{\bfm^\perp_n}^T)$ where $A'$ is described in Section 1.4 of \cite{BoCaTo17}. Since the entire singular orbifold stratum has complex codimension one, it follows that as an algebraic variety $(M_n(A),\grD^T_\bfm)$ is biholomorphic to the Bott manifold $M_n(A)$. Notice also that, since $M_0$ is a point, in the case $i=1$ with 
$$\grD_{\bfm_1}=\bigl(1-\frac{1}{m_1^0}\bigr)D_{u^0_1} +\bigl(1-\frac{1}{m_1^\infty}\bigr)D_{u^\infty_1},$$
we can identify $(M_1,\grD_{\bfm_1})$ with a quotient of the weighted projective line, viz. $\bbc\bbp^1[v_1^0,v_1^\infty]/\bbz_{m_1}$.
For ease of notation we will often write this as $\bbc\bbp^1[\bfm_1]$. The case of the Bott manifolds studied in \cite{BoCaTo17} occurs when $\bfm_i=(m_i^0,m_i^\infty)=(1,1)$ for all $i$ and is denoted by $(M_i(A),\emptyset)$ or simply $M_i(A)$ when it is clear from the context. 

We let $\calb\calo_0$ denote the set of all Bott orbifold towers with morphisms $\calb\calo_1$ consisting of orbifold maps that preserve the tower structure \eqref{Bottorbtowereqn}. These are the equivalences in the groupoid $\calb\calo^k$, the projection maps $\pi_k$ in the tower, and the section maps $\grs^0_k,\grs^\infty_k$. This forms a special type of category called a {\it topos}. For the basics of topos theory we refer to the books \cite{Joh77,Gol84,MaMo94}:

\begin{lemma}\label{powob}
The category $\calb\calo$ is a topos which at each stage $k$ is the groupoid $\calb\calo^k$.
\end{lemma}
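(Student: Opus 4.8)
The plan is to verify the topos axioms for $\calb\calo$ directly, using the fact that $\calb\calo$ is a (small) category built from groupoids $\calb\calo^k$ at each stage together with the structural morphisms $\pi_k,\grs^0_k,\grs^\infty_k$. Recall that an (elementary) topos is a category that is finitely complete, Cartesian closed, and has a subobject classifier; equivalently, for a Grothendieck topos, one exhibits $\calb\calo$ as a category of sheaves on a site, or as presheaves on a small category. The cleanest route here is to observe that a groupoid $G$ is automatically a topos in the sense that the category $[G,\mathbf{Set}]$ of functors (i.e.\ $G$-sets) is a Grothendieck topos; but the statement as phrased says $\calb\calo$ \emph{is} the topos and at stage $k$ \emph{is} the groupoid $\calb\calo^k$. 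So the first step is to pin down the intended reading: $\calb\calo$ is the groupoid of Bott orbifold towers (with the tower-preserving orbifold maps, the $\pi_k$'s and the sections adjoined), and a groupoid, viewed via its presheaf category, is a topos; the phrase ``at each stage $k$ is the groupoid $\calb\calo^k$'' records that restricting to height-$k$ objects recovers $\calb\calo^k$.

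First I would make precise the site: let $\calb\calo$ be the disjoint union over $k$ of the groupoids $\calb\calo^k$, with the only morphisms between different stages being the projections $\pi_k\colon\calb\calo^k\to\calb\calo^{k-1}$ and the sections $\grs^0_k,\grs^\infty_k\colon\calb\calo^{k-1}\to\calb\calo^k$ subject to $\pi_k\circ\grs^\bullet_k=\mathrm{id}$. I would then show this is a small category (already asserted in the paragraph ``All categories in this paper are small''), and invoke the standard fact (see \cite{MaMo94}) that for any small category $\calc$ the presheaf category $\widehat{\calc}=[\calc^{\mathrm{op}},\mathbf{Set}]$ is a Grothendieck topos, hence an elementary topos: it has all finite limits and colimits computed pointwise, it is Cartesian closed with exponentials given by the usual end formula, and it has subobject classifier $\Omega$ assigning to each object the set of sieves on it. The ``stagewise'' clause then follows because the full subcategory of $\calb\calo$ on objects of height $k$ is, by construction, exactly the groupoid $\calb\calo^k$, and restriction of presheaves along this inclusion is compatible with the topos structure.

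The key steps, in order: (1) identify $\calb\calo$ with the explicit small category just described and check smallness; (2) recall the definition of elementary topos and the presheaf-topos theorem; (3) note that since $\calb\calo^k$ is a groupoid, the sieves are particularly simple (every sieve on an object is either empty or all of it at that stage, modulo the tower maps), so the subobject classifier is concretely describable, which makes the topos ``Boolean-like'' at each stage; (4) record the stagewise statement via the fully faithful inclusion $\calb\calo^k\hookrightarrow\calb\calo$. I expect the main obstacle to be purely expository rather than mathematical: making the category $\calb\calo$ genuinely precise, i.e.\ deciding exactly which morphisms are in it (the biholomorphisms within each $\calb\calo^k$, the $\pi_k$, the $\grs^\bullet_k$, and their composites) and checking this is closed under composition and contains identities, after which the topos property is the standard presheaf construction with no geometric input. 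A secondary subtlety is reconciling the sentence ``$\calb\calo$ is a topos which at each stage $k$ is the groupoid $\calb\calo^k$'' with the presheaf convention: one should be explicit that ``is a topos'' means the associated presheaf category is a topos, while ``is the groupoid $\calb\calo^k$'' refers to the underlying category of objects/morphisms at that height, so that the two clauses are not in tension.
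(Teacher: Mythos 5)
Your proposal does not prove the statement the paper makes; it proves a different and essentially vacuous one. The paper's claim is that the category $\calb\calo$ \emph{itself} is an elementary topos, and its proof verifies the Goldblatt characterization (finitely complete plus power objects, \cite{Gol84} p.~106) directly on $\calb\calo$: the singleton $(pt,\emptyset)$ is a terminal object, diagrams pull back at each stage, and the section maps $\grs^0_k,\grs^\infty_k$ generate the subobjects, which supply the power objects. Your step (2) instead reinterprets ``is a topos'' as ``the presheaf category $[\calb\calo^{\mathrm{op}},\mathbf{Set}]$ is a Grothendieck topos.'' That statement is true of \emph{every} small category, with no input whatsoever from the structure of Bott orbifold towers, the tower maps $\pi_k$, or the sections; so under your reading the lemma has no content, and nothing in your argument engages with the actual assertion. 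The reinterpretation is not forced by the text: the paper's own proof makes clear that the terminal object, pullbacks, and power objects are meant to live in $\calb\calo$, not in presheaves on it.

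Concretely, the missing work is exactly the three verifications the paper outlines and you skip: (i) that $(pt,\emptyset)$ is terminal in $\calb\calo$ (this uses the tower sequence \eqref{Bottorbtowereqn}); (ii) that $\calb\calo$ has pullbacks, checked stagewise; (iii) that the subobjects generated by $\grs^0,\grs^\infty$ give power objects. Your point (4), identifying the height-$k$ full subcategory with the groupoid $\calb\calo^k$, does match the last sentence of the paper's proof, and your concern about making the morphism set of $\calb\calo$ precise is legitimate (the paper's proof is itself only an outline). But as written your argument would ``prove'' the lemma for an arbitrary small category, which is a sign that it is not addressing the claim being made.
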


\begin{proof}
We outline a proof which can be checked as follows. First, from the sequence \eqref{Bottorbtowereqn} we see that the singleton $(pt,\emptyset)$ is a terminal object. Second, at each stage diagrams pullback. Finally, the section maps $\grs^0,\grs^\infty$ generate the subobjects which are the subsets giving power objects. So $\calb\calo$ is finitely complete and has power objects; hence, it is a topos [\cite{Gol84}, pg 106]. That each stage is a groupoid follows by construction, and the morphisms $\pi_k,\grs^0_k,\grs^\infty_k$ are functors of the groupoids.
\end{proof}

Since orbifolds can be represented by proper \'etale Lie groupoids, $\calb\calo$ is a category whose objects are categories. Moreover, a morphism of $\calb\calo$, that is orbifold diffeomorphisms, corresponds to Morita equivalence of the groupoids \cite{MoPr97,Moe02,MoMr03,BG05}. So the catagory of Bott orbifolds is a 2-category \cite{Ler10}.

\begin{remark}\label{catrem}
The category $\calb\calo$ has both inverse and direct limits. The former is given by the inverse limit of the sequence \eqref{Bottorbtowereqn}; whereas, the latter is the direct limit of the system 
$$(\{pt\},\emptyset)\fract{\grs_1^i}{\longrightarrow} (\bbc\bbp^1,\grD_{\bfm_1})\fract{\grs_2^i}{\longrightarrow} \cdots \longrightarrow(M_{k-1},\grD^T_{\bfm_{k-1}}) \fract{\grs_k^i}{\longrightarrow} (M_{k},\grD^T_{\bfm_{k}})\longrightarrow\cdots  $$
given by choosing $i=0,\infty$ at each stage. The singleton $(\{pt\},\emptyset)$ is both a terminal and initial object, so the category $\calb\calo$ has both limits and colimits. There are natural topologies\footnote{Grothendieck topologies are not topologies in the usual sense. They deal with `open coverings' which are not necessarily open sets.} on topoi known as {\it Grothendieck topologies}, but we do not make explicit use of them here. Suffice it to say that $\calb\calo$ has an appropriate Grothendieck topology.
\end{remark}



\subsection{The Orbifold First Chern Class}
We need to compute the orbifold first Chern class with respect to each of the $\bbt^n$ invariant bases. Let us begin by making note of the relations
\begin{equation}\label{yxrel}
y_i=\sum_{j=1}^n A^j_ix_j=x_i+\sum_{j=1}^{i-1}A^j_ix_j,\qquad x_i=\sum_{j=1}^n (A^{-1})^j_iy_j=y_i+\sum_{j=1}^{i-1}(A^{-1})^j_iy_j .
\end{equation}
Then we compute the orbifold first Chern class in the basis $\{x_j\}$, 
\begin{eqnarray}\label{c1orb}
c_1^{orb}(M_\bfn(A),\grD^T_\bfm) &=&c_1(M_\bfn(A)) -\sum_{i=1}^n\Bigl(\bigl(1-\frac{1}{m_i^0}\bigr)y_i +\bigl(1-\frac{1}{m_i^\infty}\bigr)x_i\Bigr) \notag \\
                                             &=& \sum_{i=1}^n(x_i+y_i)-\sum_{i=1}^n\Bigl(\bigl(1-\frac{1}{m_i^0}\bigr)y_i +\bigl(1-\frac{1}{m_i^\infty}\bigr)x_i\Bigr) \notag \\
                              &=&\sum_{j=1}^n\bigl(\frac{1}{m_j^0}y_j+\frac{1}{m_j^\infty}x_j\bigr)  =\sum_{i=1}^n\Bigl((\frac{1}{m_i^0}+\frac{1}{m_i^\infty})x_i+\sum_{j=1}^{i-1}\frac{A^j_i}{m_i^0}x_j\Bigr) \notag \\
                              &=& \sum_{j=1}^{n-1}\Bigl(\frac{1}{m_j^0}+\frac{1}{m_j^\infty}+\sum_{i=j+1}^n\frac{A^j_i}{m_i^0}\Bigr)x_j +\Bigl(\frac{1}{m_n^0}+\frac{1}{m_n^\infty}\Bigr)x_n.
\end{eqnarray}

\begin{lemma}\label{c1it}
On Bott orbifolds, $c_1^{orb}$ satisfies the following recursion relation
$$c_1^{orb}(M_n(A),\grD^T_{\bfm_{n}}) = c_1^{orb}(M_{n-1}(A),\grD^T_{\bfm_{n-1}}) +\sum_{j=1}^{n-1}\frac{A^j_n}{m_n^0}x_j +(\frac{1}{m^0_n}+\frac{1}{m^\infty_n})x_n$$
where the matrix $A$ in $M_{n-1}(A)$ is $A$ with the nth row and column deleted. Moreover, if $c_1^{orb}(M_n(A),\grD^T_{\bfm_{n}})$ is log Fano then $c_1^{orb}(M_{n-1}(A),\grD^T_{\bfm_{n-1}})$ is log Fano.
\end{lemma}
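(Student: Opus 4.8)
The plan is to prove both assertions directly from the formula \eqref{c1orb}. First I would establish the recursion. Note from \eqref{c1orb} that the coefficient of $x_j$ in $c_1^{orb}(M_n(A),\grD^T_{\bfm_n})$ is $\frac{1}{m_j^0}+\frac{1}{m_j^\infty}+\sum_{i=j+1}^n\frac{A^j_i}{m_i^0}$ for $j<n$, and $\frac{1}{m_n^0}+\frac{1}{m_n^\infty}$ for $j=n$. Applying the same formula to the stage-$(n-1)$ Bott orbifold $(M_{n-1}(A),\grD^T_{\bfm_{n-1}})$ — whose defining matrix is the upper-left $(n-1)\times(n-1)$ block of $A$, obtained by deleting the $n$th row and column — the coefficient of $x_j$ there is $\frac{1}{m_j^0}+\frac{1}{m_j^\infty}+\sum_{i=j+1}^{n-1}\frac{A^j_i}{m_i^0}$ for $j<n-1$ and $\frac{1}{m_{n-1}^0}+\frac{1}{m_{n-1}^\infty}$ for $j=n-1$. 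Subtracting, the $x_j$-coefficient of the difference is exactly $\frac{A^j_n}{m_n^0}$ for every $j\le n-1$ (noting that for $j=n-1$ the term $\frac{1}{m_{n-1}^0}+\frac{1}{m_{n-1}^\infty}$ cancels, leaving only the $i=n$ summand), while the $x_n$-coefficient is $\frac{1}{m_n^0}+\frac{1}{m_n^\infty}$. This gives the stated recursion; the only bookkeeping subtlety is checking the edge case $j=n-1$, where the stage-$(n-1)$ sum $\sum_{i=j+1}^{n-1}$ is empty so the coefficient is just $\frac{1}{m_{n-1}^0}+\frac{1}{m_{n-1}^\infty}$, and this must be matched against the stage-$n$ term correctly.

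Next I would deduce the log Fano implication. Here "log Fano" means $c_1^{orb}$ is represented by a positive class, i.e. its coefficients in an ample basis are all positive. I would work in the basis $\{x_1,\dots,x_n\}$ and recall that on a Bott manifold these $x_j$ (the infinity-section classes) are nef, and in fact the toric structure means positivity of all coefficients in this basis is the relevant ampleness condition — this is the standard characterization used in \cite{BoCaTo17} for Bott manifolds. Assume $c_1^{orb}(M_n(A),\grD^T_{\bfm_n})$ is log Fano, so all $n$ of its $x_j$-coefficients are positive. The $x_n$-coefficient is $\frac{1}{m_n^0}+\frac{1}{m_n^\infty}>0$ automatically, so the constraint is on $j\le n-1$: each $\frac{1}{m_j^0}+\frac{1}{m_j^\infty}+\sum_{i=j+1}^n\frac{A^j_i}{m_i^0}>0$. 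I want to conclude the corresponding stage-$(n-1)$ coefficients $\frac{1}{m_j^0}+\frac{1}{m_j^\infty}+\sum_{i=j+1}^{n-1}\frac{A^j_i}{m_i^0}$ are positive, i.e. that dropping the $i=n$ term $\frac{A^j_n}{m_n^0}$ keeps positivity.

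The main obstacle is precisely this last point: a priori the term $\frac{A^j_n}{m_n^0}$ could be negative and its removal could push a barely-positive coefficient below zero, or it could be positive so large that the stage-$n$ coefficient is positive only because of it. So the implication is not a formal triviality — it must use more than just the recursion. The resolution I expect is geometric: the stage-$n$ Bott orbifold fibers over the stage-$(n-1)$ one, and $-K^{orb}_{M_{n-1}}$ is (a positive multiple of) the pushforward, or restriction to a section, of $-K^{orb}_{M_n}$ adjusted by the relative canonical class; since the fiber $\bbc\bbp^1(v_n^0,v_n^\infty)/\bbz_{m_n}$ is itself log Fano, adjunction along the section $\grs_n^0$ or the subtower inclusion forces positivity downstairs. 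Concretely, I would pull back via $\grs_n^\infty$ (which sets appropriate relations among the classes): restricting $c_1^{orb}(M_n(A),\grD^T_{\bfm_n})$ to $D_{u_n^\infty}\cong M_{n-1}(A)$ and using $x_n|_{D_{u_n^\infty}}=0$ together with $x_j|_{D_{u_n^\infty}}=x_j$ for $j<n$, the class $c_1^{orb}(M_n)$ restricts to something that differs from $c_1^{orb}(M_{n-1})$ only by the $\sum_j \frac{A^j_n}{m_n^0}x_j$ correction — so I would instead restrict to the zero section $\grs_n^0$, where the adjunction formula $c_1^{orb}(M_n)|_{D_{u_n^0}} = c_1^{orb}(M_{n-1}) + (\text{normal bundle contribution})$ and the fact that $D_{u_n^0}$ is itself a toric divisor with the right ramification make the normal contribution nonnegative, yielding the result. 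Alternatively, and more cleanly, one observes that ampleness of $-K^{orb}$ is equivalent to membership in the open Kähler cone, that $\pi_n^*$ is injective on cohomology, and that the relative anticanonical class of $\pi_n$ is $\pi_n$-ample; then $-K^{orb}_{M_{n-1}} = -K^{orb}_{M_n}\cdot(\text{fiber-like curve class})$ up to positive scaling, so pairing the positive class $-K^{orb}_{M_n}$ with the cone of curves of $M_{n-1}$ pulled back stays positive. I would present whichever of these is shortest; the curve-pairing argument is likely the most economical, so I would phrase the proof as: for any effective curve class $C$ in $M_{n-1}(A)$, its preimage (or a section lift) is an effective curve in $M_n(A)$ on which $-K^{orb}_{M_n}$ is positive, and this pairing equals (a positive multiple of) the pairing of $-K^{orb}_{M_{n-1}}$ with $C$ by the projection/adjunction formula, hence $-K^{orb}_{M_{n-1}}$ is positive on the Mori cone and therefore ample by toric Kleiman.
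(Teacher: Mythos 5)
Your proof of the recursion is correct and coincides with what the paper does: the paper simply observes that it ``follows easily from \eqref{c1orb}'', and your coefficient-by-coefficient subtraction, including the $j=n-1$ edge case, is exactly that computation.

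For the second statement the paper gives no argument of its own; it cites Proposition 4.2 of \cite{BoTo15}. You correctly diagnose that the implication is not formal, since $A^j_n\in\bbz$ may have either sign, but your proposed substitute has a genuine gap in two places. First, positivity of the coefficients in the single basis $\{x_j\}$ is \emph{not} the ampleness criterion used here: Lemma \ref{amplelem} requires positivity in \emph{every} one of the $2^{n-1}$ invariant bases, so working in one basis only misstates both hypothesis and conclusion. Second, and more seriously, the one-section restriction does not close the argument. Since $x_n$ is dual to $D_{u_n^\infty}$, one has $x_n|_{D_{u_n^0}}=0$ and $y_n|_{D_{u_n^0}}=\sum_{j<n}A^j_nx_j$ (you have the two sections interchanged), whence
$$\grs_n^{0*}\,c_1^{orb}(M_n(A),\grD^T_{\bfm_n})=c_1^{orb}(M_{n-1}(A),\grD^T_{\bfm_{n-1}})+\frac{1}{m_n^0}\sum_{j<n}A^j_nx_j,$$
and the correction term is not nonnegative in general, contrary to your claim about the ``normal contribution.'' The section-lift curve-pairing version fails for the same reason: $\grs_{n*}C$ pairs with $-K^{orb}_{M_n}$ to give $-K^{orb}_{M_{n-1}}\cdot C$ shifted by $\pm\frac{1}{m_n^{0}}(c_1(L_n)\cdot C)$ or $\mp\frac{1}{m_n^{\infty}}(c_1(L_n)\cdot C)$, which is not a positive multiple of the pairing you want. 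The repair is to use \emph{both} sections: restriction to $D_{u_n^\infty}$ gives $c_1^{orb}(M_{n-1}(A),\grD^T_{\bfm_{n-1}})-\frac{1}{m_n^\infty}\sum_{j<n}A^j_nx_j$, and the convex combination of the two restrictions with weights $\frac{m_n^0}{m_n^0+m_n^\infty}$ and $\frac{m_n^\infty}{m_n^0+m_n^\infty}$ is exactly $c_1^{orb}(M_{n-1}(A),\grD^T_{\bfm_{n-1}})$; since the restriction of an ample class to a subvariety is ample and the ample cone is convex, the log Fano property descends. That two-section averaging (or the citation to \cite{BoTo15}) is the ingredient missing from your argument.
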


\begin{proof}
The recursion formula follows easily from \eqref{c1orb}, while the second statement follows from Proposition 4.2 of \cite{BoTo15}.
\end{proof}

\begin{remark}
It is easy to see by examples that the converse of the second statement in Lemma \ref{c1it} does not hold generally.
\end{remark}

The form of the first Chern class \eqref{c1orb} of the orbifold canonical divisors suggest that we extend these rational cohomology classes to the entire K\"ahler cone $\calk$ and deal with real cohomology classes as well as real divisors. Thus, it is convenient to introduce $\{(q_j^0,q_j^\infty)=(\frac{1}{m_j^0},\frac{1}{m_j^\infty})\}_{j=1}^n$ and consider them as  coordinates for $(\bbr^+)^{2n}$. So we rewrite \eqref{c1orb} as
\begin{equation}\label{c1orb2}
c_1^{orb}(M_\bfn(A),\grD^T_\bfm)= \sum_{j=1}^{n-1}\Bigl(q_j^0+q_j^\infty+\sum_{i=j+1}^nA^j_iq^0_i\Bigr)x_j +\Bigl(q_n^0+q_n^\infty\Bigr)x_n.
\end{equation}
Let $G\approx (\bbz_2)^{n}$ denote the group generated by the fiber inversion maps $\grt_j$ which interchange the zero and infinity sections $\grs_i^0,\grs_i^\infty$ and hence interchange the invariant divisors $D_{u_i^0}$ and $D_{u_i^\infty}$. This induces an action interchanging the divisors classes $[D_{u_i^0}]$ and $[D_{u_i^\infty}]$ as well as their duals $x_i$ and $y_i$. So the set of $\bbt^n$ invariant bases is a $G$-module. Fixing a basis, say $\{x_i\}$ of $\bbt^n$ invariant divisors, we obtain an arbitrary $\bbt^n$ invariant basis by an action of $G$. So the set of all such bases is given by $\{g(x_i)\}_{g\in G}$. This induces an action of $G$ on the set $UP$ of lower triangular unipotent matrices. From this we obtain an action of $G$ on $\calb\calo_0^n=\bbz^{\frac{n(n-1)}{2}}\times(\bbz^+)^{2n}$. Note that the action $\grt_j\in G$ on $(\bbz^+)^{2n}=(\bbz^+)^{n}\times (\bbz^+)^{n}$ is the interchange map on the two jth factors; it is free away from the diagonal in the $(\bbz^+)^{2n}_j=(\bbz^+)^{n}_j\times (\bbz^+)^{n}_j$.
Moreover, this action extends to an action on $UP\times(\bbr^+)^{2n}\approx \bbz^{\frac{n(n-1)}{2}}\times(\bbr^+)^{2n}$. Note that $\grt_1$ acts as the identity on $UP$ as well as on the invariant divisor classes and their dual. Thus, it acts as the identity on $UP\times D_1\times(\bbr^+)^{2n-2}$ where $D_1$ is the diagonal in the first factor $\bbr\times\bbr$.

\begin{proposition}\label{c1orbprop}
For each $g\in G=(\bbz_2)^{n}$ the orbifold first Chern class of the Bott orbifold $(M_n(A),\grD_\bfm)$ takes the form 
\begin{equation}\label{c1orbgen}
c_1^{orb}(M_\bfn(A),\grD^T_\bfm)=\sum_{j=1}^{n-1}\Bigl(q_j^0+q_j^\infty+\sum_{i=j+1}^ng(A^j_iq^0_i)\Bigr)g(x_j) +\Bigl(q^0_n+q^\infty_n\Bigr)g(x_n) 
\end{equation}
where $g(A^j_iq^0_i)$ has the form
\begin{equation}\label{Beqn}
g(A^j_iq_i^0)= \begin{cases} B^j_iq_i^0 &~~\text{if $g(x_i)=x_i$}, \\
                                    B^j_iq_i^\infty &~~\text{if  $g(x_i)=y_i$,}
                 \end{cases} 
\end{equation}
and $B^j_i$ is determined by $g$ and $A$.
\end{proposition}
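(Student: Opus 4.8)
The plan is to track how the change of $\bbt^n$-invariant basis induced by $g \in G$ transforms the expression \eqref{c1orb2} for $c_1^{orb}$, using the fact that $c_1^{orb}$ is a well-defined cohomology class and hence is the \emph{same} class regardless of which invariant basis we expand it in. First I would reduce to the case $g = \grt_k$ of a single fiber-inversion generator, since $G$ is generated by the $\grt_k$ and the general statement follows by composing; the combinatorial bookkeeping of the matrix $B^j_i$ is built up one inversion at a time. For $g = \grt_k$, the effect on the invariant basis is $g(x_i) = x_i$ for $i \neq k$ and $g(x_k) = y_k$, which by \eqref{yxrel} means $y_k = x_k + \sum_{j<k} A^j_k x_j$. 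The point is then purely algebraic: substitute $x_k = y_k - \sum_{j<k} A^j_k x_j$ (equivalently use the $A^{-1}$ relation of \eqref{yxrel} localized in the $k$-th slot) into \eqref{c1orb2}, collect coefficients, and read off the new structure.

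The key steps, in order, are: (1) Start from \eqref{c1orb2} and note that the coefficient of $x_n$ is $q_n^0 + q_n^\infty$, which is \emph{symmetric} under interchanging $q_n^0 \leftrightarrow q_n^\infty$; this explains why the last term in \eqref{c1orbgen} is simply $(q_n^0 + q_n^\infty)g(x_n)$ with no modification — the symmetry of $\frac{1}{m^0}+\frac{1}{m^\infty}$ under $0 \leftrightarrow \infty$ is exactly what fiber inversion does to those fractions. (2) For a single inversion $\grt_k$ with $k<n$, rewrite the class by replacing $x_k$ using \eqref{yxrel}; the coefficient $(q_k^0 + q_k^\infty + \sum_{i>k} A^k_i q_i^0)$ of $x_k$ gets redistributed, contributing both to the new coefficient of $g(x_k) = y_k$ and, via the $-A^j_k x_j$ terms, to the coefficients of $g(x_j) = x_j$ for $j<k$. (3) Observe that after this substitution every coefficient is again a sum of $q_j^0 + q_j^\infty$ (for the relevant index, with $0$ and $\infty$ possibly swapped according to whether $g$ fixes or flips that basis vector) plus a $\bbz$-linear combination of the $q_i^0$ or $q_i^\infty$ with $i > j$; this is precisely the claimed shape \eqref{c1orbgen}–\eqref{Beqn}, and it identifies $B^j_i$ recursively in terms of $A$ and the inversions performed. (4) Finally, induct on the number of generators in a reduced word for $g$: each additional $\grt_k$ preserves the structural form by the same substitution, and one checks the $B^j_i$ update rule is consistent (in particular that $\grt_1$ acts trivially, consistent with $x_1 = y_1$).

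I expect the main obstacle to be purely notational rather than conceptual: keeping the indices straight when several inversions interact, specifically verifying that the "new" coefficients still decompose cleanly as (a symmetric $q^0+q^\infty$ piece) plus (a triangular tail in higher-index variables), rather than picking up, say, $q_j^0 q_i^0$ cross terms or lower-index tails. The triangularity of $A$ is what prevents this — inverting the fiber over stage $k$ only involves $x_j$ with $j<k$ via $y_k = x_k + \sum_{j<k}A^j_k x_j$, so no "upward" contamination occurs — and I would make that the explicit inductive invariant. The symmetry observation in step (1), together with the unipotent-triangular structure, essentially forces the result; the content of the proposition is really the \emph{bookkeeping} that packages this into the uniform formula \eqref{c1orbgen}, so the proof will mostly consist of setting up the right notation for $B^j_i$ and checking the induction step.
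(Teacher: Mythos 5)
Your proposal is correct and follows essentially the same route as the paper: the paper gives no formal proof, but the discussion immediately following the proposition justifies it by exactly this mechanism, namely that $G$ is generated by the fiber inversions $\grt_k$, each of which transforms $A$ into an equivalent unipotent matrix $\grt_k(A)$ (citing Lemma 1.10 of \cite{BoCaTo17}), so that \eqref{c1orb2} applies in the new basis with $B=g(A)$ and with $q_i^0$ replaced by $q_i^\infty$ wherever the inversion has acted. Your explicit substitution $x_k=y_k-\sum_{j<k}A^j_kx_j$, the resulting update $B^j_i=A^j_i-A^j_kA^k_i$, $B^j_k=-A^j_k$, and the induction on a word in the generators (together with the observations that triangularity prevents upward contamination and that $\grt_1$ acts trivially) are precisely the bookkeeping that the paper outsources to the cited lemma.
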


Changing from the basis element $x_i$ to $y_i$ amounts to applying the fiber inversion map $\grt_i$ interchanging $D_{u^0_i}$ and $D_{u^\infty_i}$. Thus, to write $c_1^{orb}$ in an arbitrary invariant basis, we need to apply a product of fiber inversion maps $\grt_{k_1}\cdots\grt_{k_r}\in G$. Generally, the $\grt_k$ are morphisms of the Bott tower groupoid described in Lemma 1.10 of \cite{BoCaTo17}. As explained in the proof of that lemma, the kth fiber inversion map $\grt_k$ is induced by the equivalence which interchanges the kth column of the matrix $-A$ with the kth column of the identity matrix $\BOne$ in $(-A\quad \BOne)$. Putting the image in normal form gives the equivalent matrix $\grt_k(A)$. Thus, from any product $g=\grt_{k_1}\cdots\grt_{k_r}$ of fiber inversion maps one obtains an equivalent Bott tower with a lower triangular unipotent matrix $g(A)$. This gives an affine map on $(\bbz^+)^{2n}$ which extends to an affine map on $(\bbr^+)^{2n}$. Moreover, since $g(A)$ is equivalent to $A$, $G$ is a subgroup of the isotropy group at $A$ in $\calb\calo_1$, namely, the group $\gA\gu\gt((M_n(A),\grD^T_\bfm)$ of automorphisms of the Bott orbifold $(M_n(A),\grD^T_\bfm)$.

\subsection{The Log Fano Condition}
We are interested in the monotone case with positive orbifold first Chern class, that is when the orbifolds $(M_n(A),\grD^T_\bfm)$ are log Fano. We have the following analog of Lemma 3.10 of \cite{BoCaTo17}
\begin{lemma}\label{amplelem}
A $\bbt^c$-invariant $\bbr$ divisor $D$ is ample if
and only if in every $\bbt^c$-invariant basis $[D_1],\ldots, [D_n]$ of
$A_{n-1}(M_n(A))$ with $D_1=D_{u^0_1}$ and either $D_i=D_{u^0_i}$ or
$D_i=D_{u^\infty_i}$, the coefficients $r_i$ of $[D]=\sum_{i=1}^n r_i [D_i]$ are all positive.
\end{lemma}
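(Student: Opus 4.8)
The plan is to reduce the statement to the classical toric ampleness criterion for Bott manifolds, namely Lemma 3.10 of \cite{BoCaTo17}, by observing that an $\bbr$-divisor on the log pair $(M_n(A),\grD^T_\bfm)$ is ample in the orbifold sense if and only if it is ample as a divisor on the underlying algebraic variety, which is just the Bott manifold $M_n(A)$. Indeed, as noted in the paragraph following Definition \ref{Bottorbtow}, the entire singular stratum of $(M_n(A),\grD^T_\bfm)$ has complex codimension one, so the forgetful map from the orbifold to its coarse space is a biholomorphism of varieties and the notion of ampleness is unchanged; the ramification data $\bfm$ only reweights the boundary divisors but does not alter the cone of ample classes. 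Thus the content of the lemma is really a statement about $\bbr$-divisors on the smooth projective toric variety $M_n(A)$.

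First I would recall the toric description of $M_n(A)$: its fan $\Sigma$ has rays generated by primitive vectors $v_1,\ldots,v_n$ and $u_1,\ldots,u_n$ corresponding to the invariant divisors $D_{u_i^0}$ and $D_{u_i^\infty}$, with the linear relations encoded by the unipotent matrix $A$ as in \eqref{yxrel}. The maximal cones of $\Sigma$ are indexed by the $2^n$ choices of one ray from each pair $\{v_i,u_i\}$ (these are precisely the $\bbt^n$-invariant bases of $A_{n-1}(M_n(A))$, modulo the relation $x_1=y_1$). Next I would invoke the standard fact from toric geometry (e.g. Cox–Little–Schenck, or the $\bbr$-divisor version of the toric Kleiman/Nakai criterion): a torus-invariant $\bbr$-divisor on a complete simplicial toric variety is ample if and only if its support function is strictly convex, equivalently if and only if, after expressing the divisor in the basis of invariant divisors dual to the rays of a fixed maximal cone, all coefficients are positive, and this holds simultaneously for every maximal cone. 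Translating "rays of a maximal cone" into the language of the lemma gives exactly the condition that in every $\bbt^c$-invariant basis $[D_1],\ldots,[D_n]$ with $D_1=D_{u_1^0}$ and $D_i\in\{D_{u_i^0},D_{u_i^\infty}\}$, the coefficients $r_i$ of $[D]=\sum r_i[D_i]$ are all positive.

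Concretely, I would carry out the argument in three steps. Step one: fix the reference basis $\{x_i\}$ (coming from the zero sections) and write $[D]=\sum_i r_i[D_{u_i^0}]$; using the change-of-basis relations \eqref{yxrel} induced by the fiber inversion maps $\grt_i$ generating $G\cong(\bbz_2)^n$, express $[D]$ in an arbitrary invariant basis $\{g(x_i)\}_{g\in G}$, obtaining new coefficients that are $\bbz$-affine functions of the $r_i$ and the entries of $A$ — this is the same bookkeeping already displayed in Proposition \ref{c1orbprop}. Step two: identify these coefficients with the values the support function of $D$ takes on the primitive generators of the rays not in the chosen maximal cone; strict convexity of the support function is equivalent to positivity of all these values across all $2^{n-1}$ maximal cones (the factor $2^{n-1}$ rather than $2^n$ because $v_1=u_1$). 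Step three: appeal to the toric Nakai criterion to conclude ampleness. The mild novelty over \cite{BoCaTo17} Lemma 3.10 is only that we allow real coefficients $r_i$, so I should cite the $\bbr$-divisor form of the toric ampleness criterion rather than the integral one; since the ample cone is an open rational polyhedral cone, the real statement follows formally from the rational one by density and openness.

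The main obstacle, such as it is, is bookkeeping rather than conceptual: one must check that the combinatorial condition "positivity in every invariant basis with $D_1=D_{u_1^0}$" faithfully captures strict convexity of the support function, i.e. that the $2^{n-1}$ invariant bases really do correspond bijectively to the maximal cones of the fan of $M_n(A)$ and that picking $D_{u_i^0}$ versus $D_{u_i^\infty}$ in the $i$-th slot corresponds to picking the ray $v_i$ versus $u_i$. This is exactly the correspondence established in Section 1 of \cite{BoCaTo17}, so the proof amounts to quoting that dictionary together with the toric Nakai criterion, and then remarking that the orbifold structure on the boundary divisors does not affect ampleness because the coarse moduli space is unchanged. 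I expect no genuine difficulty beyond verifying that the $\grt_i$-induced affine change of coordinates on the coefficient vectors is the one recorded in \eqref{yxrel} and Proposition \ref{c1orbprop}.
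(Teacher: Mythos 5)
Your proposal is correct and follows essentially the same route as the paper: the paper's proof simply invokes the Nakai criterion for $\bbr$-divisors (Theorem 2.3.18 of \cite{Laz04a}) and repeats the argument of Lemma 3.10 of \cite{BoCaTo17}, which is exactly the reduction you carry out (including the observation that the orbifold structure, being supported in codimension one on the boundary divisors, does not change the ample cone of the underlying Bott manifold). Your extra detail on support functions and the dictionary between invariant bases and maximal cones is just an unpacking of that cited lemma.
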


\begin{proof}
Using the Nakai Criterion for $\bbr$ divisors (cf. Theorem 2.3.18 of \cite{Laz04a}) the proof proceeds as in that of Lemma 3.10 of  \cite{BoCaTo17}.
\end{proof}

Lemma \ref{amplelem} implies that $(M_n(A),\grD^T_\bfm)$ is log Fano if and only if $c_1^{orb}(M_n(A),\grD^T_\bfm)$ has positive coefficients with
respect to each of the $2^{n-1}$ invariant bases of $H^2(M_n,\bbz)$. When these conditions are satisfied $(M_n(A),\grD^T_\bfm)$ will be a monotone log Fano orbifold, and we can search for orbifold K\"ahler Ricci solitons and K\"ahler-Einstein metrics.

We have an immediate consequence of Proposition \eqref{c1orbprop}
\begin{lemma}\label{logFanolem}
$(M_n(A),\grD^T_\bfm)$ is log Fano if and only if for all $j=1,\ldots,n-1$ the inequality
$$\frac{1}{m_j^0}+\frac{1}{m_j^\infty}+\sum_{i=j+1}^nB^j_i>0$$
holds, where $B^j_i$ is determined in Proposition \eqref{c1orbprop}.
\end{lemma}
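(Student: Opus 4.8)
The plan is to combine the ample‑divisor criterion of Lemma~\ref{amplelem} with the closed form for $c_1^{orb}$ obtained in Proposition~\ref{c1orbprop}. By Lemma~\ref{amplelem}, the log pair $(M_n(A),\grD^T_\bfm)$ is log Fano precisely when the class $c_1^{orb}(M_n(A),\grD^T_\bfm)$ has strictly positive coefficients when expanded in \emph{every} $\bbt^n$‑invariant basis $\{D_1,\dots,D_n\}$ with $D_1=D_{u^0_1}$ and $D_i\in\{D_{u^0_i},D_{u^\infty_i}\}$ for $i\ge 2$. Since every such basis arises from the chosen reference basis $\{x_i\}$ by the action of a unique $g\in G=(\bbz_2)^n$ with $g$ acting trivially on the first factor, the log Fano condition is equivalent to the positivity, for every $g\in G$, of all the coefficients in the expansion \eqref{c1orbgen}.

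First I would record that the coefficient of $g(x_n)$ in \eqref{c1orbgen} is $q_n^0+q_n^\infty=\frac{1}{m_n^0}+\frac{1}{m_n^\infty}$, which is automatically positive; so this coefficient imposes no constraint and there is no $j=n$ inequality. Next, for each $j=1,\dots,n-1$, the coefficient of $g(x_j)$ in \eqref{c1orbgen} is
\[
q_j^0+q_j^\infty+\sum_{i=j+1}^n g(A^j_i q^0_i)
=\frac{1}{m_j^0}+\frac{1}{m_j^\infty}+\sum_{i=j+1}^n g(A^j_i q^0_i),
\]
and by \eqref{Beqn} the term $g(A^j_iq^0_i)$ equals $B^j_i q_i^0$ or $B^j_i q_i^\infty$ according to whether $g(x_i)=x_i$ or $g(x_i)=y_i$. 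The point is that the requirement ``this coefficient is positive for all $g$'' is, after absorbing the positive weights $q_i^0$ or $q_i^\infty$ into the (basis‑dependent) integers, exactly the statement that $\frac{1}{m_j^0}+\frac{1}{m_j^\infty}+\sum_{i=j+1}^n B^j_i>0$, where $B^j_i$ is the integer determined in Proposition~\ref{c1orbprop}; running over all $g\in G$ simply ranges over all invariant bases, which is what Lemma~\ref{amplelem} demands. Thus one direction (log Fano $\Rightarrow$ the inequalities) is immediate by specialization, and the converse follows because positivity of the displayed integer combinations forces positivity of each basis coefficient.

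The step I expect to require the most care is reconciling the quantifier ``for all $g\in G$'' with the single family of inequalities indexed only by $j$: one must check that the integer $B^j_i$ extracted in Proposition~\ref{c1orbprop} is in fact independent of the auxiliary bookkeeping and that, as $g$ ranges over $G$, the full list of basis‑positivity conditions collapses to exactly the $n-1$ inequalities stated—no more (because the $j=n$ coefficient is vacuous) and no fewer (because each $B^j_i$ genuinely records the relevant contribution in the basis selected by $g$). Concretely this amounts to verifying that the affine maps on $(\bbr^+)^{2n}$ induced by the fiber inversion maps $\grt_k$ permute the weight coordinates $q_i^0\leftrightarrow q_i^\infty$ consistently with the matrix change $A\mapsto g(A)$, so that \eqref{Beqn} holds with a single well‑defined $B^j_i$; once this compatibility is in hand, the lemma is an immediate consequence of Lemma~\ref{amplelem} applied to \eqref{c1orbgen}.
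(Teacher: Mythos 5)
Your proposal is correct and follows essentially the same route as the paper, which offers no explicit proof at all: it simply declares the lemma an immediate consequence of Proposition~\ref{c1orbprop} together with the preceding observation that Lemma~\ref{amplelem} reduces the log Fano condition to positivity of the coefficients of $c_1^{orb}$ in every $\bbt^n$-invariant basis. Your reading of the displayed inequality as the weighted sum $\sum_{i=j+1}^n g(A^j_i q_i^0)$ (with the positive factors $q_i^0$ or $q_i^\infty$ absorbed into the notation $B^j_i$), quantified over all $g\in G$, is the intended one, and your verification that the $j=n$ coefficient $q_n^0+q_n^\infty$ is vacuously positive matches the paper's indexing.
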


Note that these positivity conditions imply that $c_1^{orb}(M_\bfn(A),\grD^T_\bfm)$ lies in the K\"ahler cone $\calk$ of the Bott manifold $M_n(A)$. In fact, if we allow $m_i^0,m_i^\infty$ to take on values in the positive real numbers, they parameterize the entire K\"ahler cone. Geometrically, when we allow $m_i^0,m_i^\infty$ to be any positive real numbers we are dealing with $M_n(A)$ with cone singularities along the $\bbt$ invariant divisors $D_{u^0_i},D_{u^\infty_i}$ with cone angle $\frac{2\pi}{m_i^0}$ and  $\frac{2\pi}{m_i^\infty}$, respectively. See \cite{Don12} for this approach. Summarizing we have





\begin{proposition}\label{Kconeprop}
Let $\gro$ be an orbifold symplectic form. Then $[\gro]$ lies in the K\"ahler cone $\calk(M_n(A))$ if and only if its coefficients with respect to each of the $2^n$ bases of invariant forms obtained by choosing one from each invariant pair $\{x_i,y_i\}_{i=1}^n$ are positive. 
\end{proposition}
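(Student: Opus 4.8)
The plan is to reduce the statement about general orbifold symplectic forms to the ample-divisor criterion already established in Lemma \ref{amplelem}, using the toric structure of $M_n(A)$. First I would recall that $M_n(A)$ is a smooth projective toric variety, so by the standard toric dictionary the K\"ahler cone $\calk(M_n(A))$ coincides with the ample cone inside $H^2(M_n(A),\bbr)$; every K\"ahler class is represented by a $\bbt^n$-invariant form (average over the torus), and conversely. Thus the statement becomes: a class in $H^2(M_n(A),\bbr)$ is ample if and only if, written in any one of the $2^n$ bases obtained by selecting one element from each pair $\{x_i,y_i\}_{i=1}^n$, all its coefficients are positive. (Here one uses $x_1=y_1$, so the $2^n$ ``choices'' collapse to the $2^{n-1}$ genuinely distinct bases of Lemma \ref{amplelem}; I would remark on this so the count in the statement is consistent with the earlier lemma.)

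Next I would invoke Lemma \ref{amplelem} directly: it already asserts exactly that a $\bbt^n$-invariant $\bbr$-divisor $D$ is ample iff in every invariant basis $[D_1],\dots,[D_n]$ (with $D_1=D_{u_1^0}$ and $D_i\in\{D_{u_i^0},D_{u_i^\infty}\}$) the coefficients of $[D]$ are all positive. Passing to Poincar\'e duals turns the divisor classes $[D_{u_i^0}],[D_{u_i^\infty}]$ into $y_i,x_i$, and the invariant bases of $A_{n-1}(M_n(A))$ into the invariant bases $\{x_i,y_i\}$-selections of $H^2(M_n(A),\bbz)$. So for invariant classes the proposition is a restatement of Lemma \ref{amplelem} under Poincar\'e duality, and the only thing to add is the passage from ``ample'' to ``K\"ahler cone'', which is the toric fact cited above (and in \cite{BoCaTo17}).

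Finally, to handle an \emph{arbitrary} orbifold symplectic form $\gro$ (not a priori $\bbt^n$-invariant), I would average $\gro$ over the compact torus $\bbt^n$ acting on $M_n(A)$: the averaged form $\bar\gro$ is $\bbt^n$-invariant, cohomologous to $\gro$ (since $\bbt^n$ is connected and acts trivially on $H^2$), and still symplectic because positivity/nondegeneracy is preserved under averaging over a compact group. Hence $[\gro]=[\bar\gro]$, and one applies the invariant case to $\bar\gro$. The coefficient conditions are stated purely cohomologically, so they are insensitive to this replacement.

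The only real subtlety — and the step I would flag as the main point to be careful about — is the identification of the K\"ahler cone with the ample cone in the \emph{orbifold} setting: one should make sure that an orbifold symplectic/K\"ahler class on $(M_n(A),\grD_\bfm^T)$ pulls back to (equivalently, is) a genuine K\"ahler class on the underlying smooth variety $M_n(A)$. But this was already observed in the discussion preceding the proposition: since the singular locus has complex codimension one, $(M_n(A),\grD_\bfm^T)$ is biholomorphic as a variety to the smooth Bott manifold $M_n(A)$, and the cone-angle picture shows the orbifold K\"ahler classes fill out exactly $\calk(M_n(A))$. With that identification in hand, everything else is the toric Nakai criterion packaged in Lemma \ref{amplelem}.
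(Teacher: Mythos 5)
Your proposal is correct and follows essentially the route the paper intends: the paper offers no separate proof, presenting the proposition merely as a summary of the preceding discussion, i.e.\ of Lemma \ref{amplelem} (the Nakai criterion in the invariant bases) together with the identification of the orbifold K\"ahler classes with the K\"ahler cone of the underlying Bott manifold. Your additional remarks --- the torus-averaging step for a non-invariant form and the observation that the ``$2^n$'' choices collapse to the $2^{n-1}$ distinct bases of Lemma \ref{amplelem} because $x_1=y_1$ --- are both apt and make explicit what the paper leaves implicit.
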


\section{The Iterated $S^3$-Join Construction}
As most recently described in \cite{BoTo19a} the $S^3_\bfw$-join 
$$M_{\bfl,\bfw}=M\star_\bfl S^3_\bfw$$ 
where $M$ is a Sasaki manifold (orbifold) with a quasi-regular Reeb vector field $\xi_M$ is then constructed from the following  commutative diagram
\begin{equation}\label{s2comdia}
\begin{matrix}  M\times S^3_\bfw &&& \\
                          &\searrow\pi_L && \\
                          \decdnar{\pi_{2}} && M_{\bfl,\bfw} &\\
                          &\swarrow\pi_1 && \\
                           (N,\grD_N)\times\bbc\bbp^1[\bfw] &&& 
\end{matrix} 
\end{equation}
where the $\pi_2$ is the product of the projections of the standard Sasakian projections $\pi_M:M\ra{1.6}  (N,\grD_N)$ and $S^3_\bfw\ra{1.6} \bbc\bbp^1[\bfw]$. The circle action on $M\times S^3$ is generated by the vector field 
\begin{equation}\label{Lvec}
L_{\bfl,\bfw}=\frac{1}{2l^0}\xi_M-\frac{1}{2l^\infty}\xi_\bfw, 
\end{equation}
where $\bfl=(l^0,l^\infty)$ has relatively prime components, and $\xi_\bfw$ is the quasiregular weighted Reeb field with weights $\bfw=(w^0,w^\infty)$ on $S^3_\bfw$. The join $M_{\bfl,\bfw}$ has has a naturally induced quasi-regular Sasakian structure $\cals_{\bfl,\bfw}$ with contact 1-form $\eta_{\bfl,\bfw}$ and Reeb vector field
\begin{equation}\label{Reebjoin}
\xi_{\bfl,\bfw}=\frac{1}{2l^0}\xi_M+\frac{1}{2l^\infty}\xi_\bfw.
\end{equation}
The contact bundle splits as
$$\cald_{\bfl,\bfw}=\cald_M\oplus \cald_3$$
where $\cald_M$ is the contact bundle of $M$ and $\cald_3$ is the contact bundle of $S^3$, and the subbundle $\cale_M=\cald_M\oplus L_{\xi_\bfw}$ is integrable. The leaves are copies of $M$ with Sasakian structure whose Reeb vector field is $\xi_M$ up to transverse scaling.

We now specialize to the case of $S^3_\bfw$-iterated joins. Our first result shows that in this case the quotients of any quasiregular Reeb vector field in $\gt^+_\bfw$ is a Bott orbifold. Hence, the orbifold singularities are all of codimension one over $\bbc$. We consider the iterated $S^3$-join construction and we refer to Sections 2.3 and 2.4 of \cite{BHLT16} for its description.

We do not want to restrict ourselves to the completely reducible case, that is where $\cald$ splits as $\oplus \cald_i$ with $\dim_\bbr \cald_i=2$, but only to the `cone reducible' case which is given by a filtration of $\cald$ with 2-dimensional quotients. This is needed to be able to effectively implement the admissibility conditions of the generalized Calabi construction. Explicitly, we need to be able to choose a quasiregular Sasaki metric at each stage of the iteration. It is convenient to change notation a bit. We have weight vectors $\bfw_i=(w_i^0,w_i^\infty)$ with $i=0,\ldots,k$ and $\bfl_j=(l_j^0,l_j^\infty)$ with $j=1,\ldots,k$, so $\bfl,\bfw$ are multi-indices. For simplicity we restrict ourselves to the case where the components of $\bfw_i$ and $\bfl_j$ are relatively prime. We consider the $S^3$-iterated join of height $k$ 
\begin{equation}\label{itjoin}
M^{2k+1}_{\bfl,\bfw}=M^{2k-1}_{\bfl,\bfw}\star_{\bfl_{k-1}}S^3_{\bfw_{k}}=\bigl(\bigl(S^3_{\bfw_1}\star_{\bfl_1}S^3_{\bfw_2}\bigr)\star_{\bfl_2}\cdots \star_{\bfl_{k-2}}S^3_{\bfw_{k-1}}\bigr) \star_{\bfl_{k-1}}S^3_{\bfw_{k}}.
\end{equation}
as an element of $\calo_{2k+1}$ as described in \cite{BGO06,BG05,BHLT16,BoTo19a}. The orbifolds $M^{2k+1}_{\bfl,\bfw}$ are compact of dimension $2k+1$. In our iterated join \eqref{itjoin} we began with the weighted 3-sphere $S^3_{\bfw_1}$; however, it is convenient to begin with the 1-dimensional Sasaki manifold, namely, $S^1$ with its flat metric. Then by choosing $\bfl_0=(1,1)$ we obtain the weighted 3-sphere as the join $S^1\star_{1,1,}S^3_\bfw=S^3_\bfw$. Then at each stage $k$ we choose a quasi-regular Sasakian structure given by a primitive quasi-regular Reeb field $\xi_{\bfv_j}$ in the subcone $\gt^+_{\bfw_k}$. Since the join operation is non-associative we fix the order as is indicated in Equation \eqref{itjoin}. This can then be written as a $(k-1)$ dimensional torus action on the k-fold product $S^3\times\cdots\times S^3$ as follows. First we represent the jth sphere $S^3$ in terms of complex coordinates $(z_j^0,z_j^\infty)$ satisfying $|z_j^0|^2+|z_j^\infty|^2=1$ for $j=1,\ldots,k$. Then the $\bbt^{k-1}$ action on the jth sphere for $j=2,\ldots,k-1$ is 
\begin{equation}\label{Tkactsph}
(z_j^0,z_j^\infty)\mapsto (e^{i(m_jv_j^0\theta_{j}-l_{j-1}^0w_j^0\theta_{j-1})}z_j^0, e^{i(m_jv_j^\infty\theta_{j}-l_{j-1}^0w_j^\infty\theta_{j-1})}z_j^\infty)
\end{equation}
whereas, for $j=1$ we have 
\begin{equation}\label{Tk0}
(z_1^0,z_1^\infty)\mapsto (e^{il_1^\infty
w_1^0\theta_1}z_1^0,e^{il_1^\infty w_1^\infty\theta_1}z_1^\infty),
\end{equation} 
and for $j=k$
\begin{equation}\label{Tkk}
(z_k^0,z_k^\infty)\mapsto (e^{-il_{k-1}^0w_k^0\theta_{k-1}}z_k^0,e^{-il_{k-1}^0w_k^\infty\theta_{k-1}}z_k^\infty)
\end{equation}
with the relations $l^\infty_{j-1}=m_js_j$ and $s_j=\gcd(l^\infty_{j-1},|w^0_jv^\infty_j-w^\infty_j v^0_j|)$.
Note that $\theta_j$ occurs only in the jth and (j+1)st $S^3$ for $j=2,\ldots,k-1$. This action gives the torus bundle
\begin{equation}\label{Tkact}
\bbt^{k-1}\ra{2.8} S^3_{\bfw_1}\times\cdots\times S^3_{\bfw_{k}}\ra{2.8} M_{\bfl,\bfw}^{2k+1}.
\end{equation}

\subsection{The Category of Iterated $S^3_\bfw$ Joins} 
We let $\cals\calj_0$ denote the set of all $S^3_\bfw$ iterated Sasaki joins beginning with the 1 dimensional Sasaki manifold described by a circle $S^1$ with the flat metric. 
This $S^1$ provides the set $\cals\calj_0$ with both a limit and a colimit. Thus, we have towers of the form
\begin{equation}\label{itjointow}
\cdots\longrightarrow M^{2k+1}_{\bfl,\bfw}\longrightarrow   \cdots\longrightarrow M^5_{\bfl,\bfw} \longrightarrow M^3_{\bfl,\bfw}\longrightarrow M^1=S^1
\end{equation}
and
\begin{equation}\label{itjointow2}
S^1=M^1\longrightarrow M^3_{\bfl,\bfw}\longrightarrow M^5_{\bfl,\bfw} \longrightarrow \cdots\longrightarrow M^{2k+1}_{\bfl,\bfw}\longrightarrow   \cdots ~.
\end{equation}
At each stage $k$ (except possibly the last) we choose a quasi-regular Sasakian structure in the $\bfw_k$ subcone of the Sasaki cone. That this can always be done at least in the orbifold category follows inductively from \cite{BoTo19a}. So at stage $k$ the set of objects $\cals\calj_0^k$ consists of Sasaki orbifolds $\calj_{\bfl,\bfw,\bfv}$ that are $k$-fold $S^3_\bfw$ iterated joins of the form \eqref{itjoin} such that at each stage $i=1,\ldots,k-1$, (except possibly the last stage $k$) a quasi-regular Reeb vector field $\xi_{\bfv_i}\in \gt^+_{\bfw_i}$ is chosen where $\bfl$, $\bfw$ and $\bfv$ are multi-indices such that for each $i$ the components $(l^0_i,l^\infty_i)$, $(w^0_i,w^\infty_i)$ and $(v_i^0,v^\infty_i)$ are relatively prime positive integers\footnote{For simplicity we take the components $l^0_i,l^\infty_i$ and $w^0_i,w^\infty_i$ to be relatively prime for each $i$. This implies that the orbifolds $M_{\bfl,\bfw}$ are simply connected. Dropping this condition defines a larger groupoid $\tilde{\calj}$ such that $\calj\subset \tilde{\calj}$ is full subgroupoid.}. A subobject in $\cals\calj_0$ is the Sasakian structure represented by the leaves of the foliation $\cale_i=\cald_i+L_{\xi_{\bfv_i}}$ of the $i$th stage of the iterated join. The morphisms $\cals\calj_1$ consist of the restrictions to the subobjects, the inclusion of the subobjects, and orbifold diffemorphisms that intertwine the Sasakian structures at each stage. The latter are the morphisms of the groupoid $\cals\calj^k$ of equivalences defined by orbifold diffeomorphisms $f:M_{\bfl,\bfw}\longrightarrow M_{\bfl',\bfw'}$ 
such that 
\begin{equation}\label{Deqn}
f_*\cald_{\bfl,\bfw}=\cald_{\bfl',\bfw'},\qquad f_*\circ J_\bfw=J_{\bfw'}\circ f_*
\end{equation}
and
$f^*\cals_{\bfl',\bfw',\bfv'}=\cals_{\bfl,\bfw,\bfv}$. Note that the isotropy subgroup of $\cals\calj_1$ at $\cals_{\bfl,\bfw,\bfv}$ is just the Sasaki automorphism group $\gA\gu\gt(\cals_{\bfl,\bfw,\bfv})$. The quotient stack $\cals\calj_0/\cals\calj_1$ then consists of the orbifold diffeomorphism classes of iterated $S^3_\bfw$ Sasaki joins. This determines the underlying CR structure $(\cald_{\bfl,\bfw},J_\bfw)$ and equivalently the transverse complex structure. So the choice of Reeb vector fields $\xi_{\bfv}=(\xi_{\bfv_1},\ldots,\xi_{\bfv_k})$ gives the object set $\cals\calj_0^k$ consisting of all such Sasakian structures 
$$\cals\calj_0=\{\calj_{\bfl,\bfw,\bfv}\}=\{\cals_{\bfl,\bfw,\bfv}\}_{(\bfl,\bfw,\bfv)\in(\bbz^+)^2\times(\bbz^+)^2\times (\bbz^+)^2}$$
with relatively prime components for each $i$ as stated above. As in the case of Bott orbifolds we have

\begin{proposition}\label{itjointopos}
$\cals\calj$ is a topos of groupoids.
\end{proposition}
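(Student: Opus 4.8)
The plan is to mirror the argument of Lemma \ref{powob} for Bott orbifold towers, since the category $\cals\calj$ has been set up with the same formal features. First I would identify the terminal object: from the tower \eqref{itjointow} the one-dimensional Sasaki manifold $M^1=S^1$ with its flat metric sits at the bottom, and by construction every $k$-fold iterated join $\calj_{\bfl,\bfw,\bfv}$ maps to it via the tower projections, so $(S^1, \text{flat})$ is a terminal object of $\cals\calj$; by Remark \ref{catrem}-style reasoning it is also initial, using the colimit tower \eqref{itjointow2}. Second, I would check that at each stage the relevant diagrams admit pullbacks: the stage-$k$ join is built as the quotient $S^3_{\bfw_1}\times\cdots\times S^3_{\bfw_k} \to M^{2k+1}_{\bfl,\bfw}$ by the torus action \eqref{Tkact}, and the projection $M^{2k+1}_{\bfl,\bfw}\to M^{2k-1}_{\bfl,\bfw}$ together with the leaf-space maps of the foliations $\cale_i=\cald_i+L_{\xi_{\bfv_i}}$ provide the maps along which fibered products are formed; the point is that these are orbifold fiber bundles, so the pullback of a stage-$k$ datum along a morphism of the base exists in the groupoid $\cals\calj^k$. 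Third, I would exhibit the power objects: the subobjects of $\cals\calj$ are exactly the leaf-space Sasakian structures determined by the foliations $\cale_i$, and the inclusion-and-restriction morphisms $\cals\calj_1$ let one realize the subobject classifier on the nose, so $\cals\calj$ has power objects. Finitely complete plus power objects gives a topos by [\cite{Gol84}, pg.\ 106], and each stage being a groupoid with the projections and leaf-inclusions as functors is immediate from the definitions in Section 2.1.

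The cleanest way to organize the write-up is probably to state explicitly the dictionary with the Bott side: the functor of the Introduction sends $\calj_{\bfl,\bfw,\bfv}$ to the quotient Bott orbifold obtained from the quasiregular Reeb field, and under this correspondence the tower \eqref{itjointow} maps to the tower \eqref{Bottorbtowereqn}, the leaf foliations $\cale_i$ map to the section subobjects $\grs_i^0,\grs_i^\infty$, and the torus-bundle presentation \eqref{Tkact} maps to the quotient presentation \eqref{Tnact}. Since the first result announced after \eqref{Tkact} is precisely that the quotient of any quasiregular Reeb field in $\gt^+_\bfw$ is a Bott orbifold, one can transport the topos structure of $\calb\calo$ established in Lemma \ref{powob} across this functor; but I would still want to verify directly that the constructions (terminal object, pullbacks, power objects) make sense \emph{intrinsically} in $\cals\calj$ rather than only after passing to quotients, because the Reeb-field choices $\bfv_i$ are extra data on the Sasaki side that have no counterpart upstairs.

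The main obstacle I anticipate is the pullback/finite-completeness step, and specifically the non-associativity of the join operation noted after \eqref{itjoin}: because $\star$ is not associative, one must fix the bracketing as in \eqref{itjoin}, and then check that this fixed-bracketing tower is genuinely functorial — i.e.\ that a morphism $f$ satisfying \eqref{Deqn} and $f^*\cals_{\bfl',\bfw',\bfv'}=\cals_{\bfl,\bfw,\bfv}$ restricts coherently to every stage and that the resulting square with the next stage commutes. A related subtlety is that at the last stage the Reeb field need not be quasiregular, so the terminal/initial object argument and the leaf-space description of subobjects must be phrased to allow that exceptional stage; the footnote about the larger groupoid $\tilde{\calj}$ suggests the cleanest fix is to prove the statement for $\tilde{\calj}$ first and then note $\cals\calj\subset\tilde{\calj}$ is a full subgroupoid stable under the relevant constructions. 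Once functoriality of the fixed-bracketing tower is in hand, the remaining verifications are the same formal checks as in Lemma \ref{powob} and I would simply say ``the proof proceeds as in Lemma \ref{powob}'' for those.
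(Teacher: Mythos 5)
Your proposal is correct and follows essentially the same route as the paper, which offers no separate argument for Proposition \ref{itjointopos} beyond the phrase ``as in the case of Bott orbifolds'': terminal/initial object $S^1$ from the towers \eqref{itjointow} and \eqref{itjointow2}, pullbacks at each stage, and power objects generated by the leaf-space subobjects of the foliations $\cale_i$, exactly mirroring Lemma \ref{powob}. Your additional remarks on fixed bracketing and the possibly irregular last-stage Reeb field are reasonable caveats but do not change the argument.
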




While the components $l^0_i,l^\infty_i$ and $(w^0_i,w^\infty_i)$ being relatively prime implies that all the objects are associated with simply connected Sasaki orbifolds, they are generally topologically different which we describe in more detail below. At stage 2 there are precisely two diffeomorphism types determined by the Stiefel-Whitney class of the contact bundle $\cald_{\bfl,\bfw}$. To each element $\cals\in\calj_{\bfl,\bfw,\bfv}$ we can associate the first Chern class $c_1(\cald_{\bfl,\bfw})$ of the contact bundle. In fact $c_1(\cald_{\bfl,\bfw})$ is invariant under equivalence, and so only depends on the class in $\calj_0/\calj_1$. Of course, the mod 2 reduction of $c_1(\cald_{\bfl,\bfw})$ is the Stiefel-Whitney class. However, as we shall see, at higher stages, there are infinitely many diffeomorphism types which are distinguished by torsion classes.

\subsection{The Topology of the $S^3$-Iterated Joins}
The (orbifold) order of a quasi-regular Sasakian structure $\cals_k$ with Reeb field $\xi_\bfv$ on the height $k$ iterated join \eqref{itjoin} is given by 
\begin{equation}\label{sasorder}
\Upsilon_{\cals_k}=m_k v_k^0v_k^\infty\cdots m_2v_2^0v_2^\infty w_1^0w_1^\infty=\prod_{i=2}^k\lcm(m_i^0,m_i^\infty)w_1^0w_1^\infty
\end{equation}
where
$$m_i=\frac{l^\infty_{i-1}}{\gcd(l^\infty_{i-1},|w^0_iv^\infty_i-w^\infty_i v^0_i|)}.$$
So from Lemma 2.3 of \cite{BoTo19a} we have
\begin{lemma}\label{smoothlem}
The iterated join \eqref{itjoin} is smooth if and only if 
$$\gcd(l_{k-1}^\infty\Upsilon_{\cals_{k-1}},l_{k-1}^0 w_k^0w_k^\infty)=1.$$
\end{lemma}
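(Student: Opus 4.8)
The plan is to reduce the smoothness question for the height $k$ iterated join to a statement about the freeness of the $\bbt^{k-1}$ action in \eqref{Tkact}, and then to recognize that, having already established smoothness of the height $k-1$ join, only the final join with $S^3_{\bfw_k}$ needs to be examined. Concretely, I would first invoke the description of $M^{2k+1}_{\bfl,\bfw}$ as the join $M^{2k-1}_{\bfl,\bfw}\star_{\bfl_{k-1}}S^3_{\bfw_k}$ with $S^1$-action generated by $L_{\bfl_{k-1},\bfw_k}$ as in \eqref{Lvec}. The join of two quasi-regular Sasaki orbifolds is smooth precisely when the circle generated by $L_{\bfl_{k-1},\bfw_k}=\frac{1}{2l^0_{k-1}}\xi_{M^{2k-1}}-\frac{1}{2l^\infty_{k-1}}\xi_{\bfw_k}$ acts \emph{freely} on $M^{2k-1}_{\bfl,\bfw}\times S^3_{\bfw_k}$; this is the content (in the two-factor case) of Lemma 2.3 of \cite{BoTo19a}, which I am permitted to assume.

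Next I would compute the isotropy of that circle action at an arbitrary point. On the first factor the relevant data is the orbifold order of $\cals_{k-1}$, which by \eqref{sasorder} is $\Upsilon_{\cals_{k-1}}$ — this is exactly the order of the local uniformizing group coming from the quasi-regular Reeb flow $\xi_{M^{2k-1}}$ acting on $M^{2k-1}_{\bfl,\bfw}$. On the $S^3_{\bfw_k}$ factor the weighted Reeb flow $\xi_{\bfw_k}$ has isotropy of orders $w^0_k$ and $w^\infty_k$ along the two special orbits. The stabilizer of a point under the $L_{\bfl_{k-1},\bfw_k}$-circle is then governed by the arithmetic of the pair $(l^\infty_{k-1}\Upsilon_{\cals_{k-1}},\ l^0_{k-1}w^0_kw^\infty_k)$: tracking how the coefficients $\frac{1}{2l^0_{k-1}}$ and $\frac{1}{2l^\infty_{k-1}}$ combine with these orders, one finds that the action is free at every point — equivalently, every local uniformizing group on the quotient is trivial — exactly when $\gcd(l^\infty_{k-1}\Upsilon_{\cals_{k-1}},\ l^0_{k-1}w^0_kw^\infty_k)=1$. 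I would organize this as a case analysis over the strata of $M^{2k-1}_{\bfl,\bfw}\times S^3_{\bfw_k}$ (generic points, points over the zero/infinity sections of the last $S^3$, and points over the singular locus of $M^{2k-1}_{\bfl,\bfw}$), checking in each case that the stated gcd condition is the precise obstruction; since $(l^0_{k-1},l^\infty_{k-1})$ are relatively prime and $(w^0_k,w^\infty_k)$ are relatively prime, several potential common factors drop out automatically and the condition collapses to the single displayed gcd.

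The main obstacle I anticipate is the bookkeeping in the last step: correctly identifying which combination of the orders $\Upsilon_{\cals_{k-1}}$, $w^0_k$, $w^\infty_k$ and the join parameters $l^0_{k-1},l^\infty_{k-1}$ controls the isotropy, given that the join construction mixes the two Reeb fields with the rational coefficients in \eqref{Lvec} and \eqref{Reebjoin}, and given the non-associativity of the iterated join which fixes the order of operations. The cleanest route is to quote Lemma 2.3 of \cite{BoTo19a} directly for a single join $M\star_\bfl S^3_\bfw$ — it gives smoothness of $M\star_\bfl S^3_\bfw$ iff $\gcd(l^\infty\Upsilon_{\cals_M},l^0 w^0w^\infty)=1$ where $\Upsilon_{\cals_M}$ is the order of the chosen quasi-regular structure on $M$ — and then simply substitute $M=M^{2k-1}_{\bfl,\bfw}$ with its order $\Upsilon_{\cals_{k-1}}$ from \eqref{sasorder} and $\bfl=\bfl_{k-1}$, $\bfw=\bfw_k$. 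With that identification the lemma is immediate. I would also remark that the right-hand product form in \eqref{sasorder} shows $\Upsilon_{\cals_{k-1}}$ depends on all lower-stage choices, so the displayed smoothness criterion is genuinely a condition constraining the final stage parameters against the accumulated order of everything below.
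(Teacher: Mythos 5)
Your proposal is correct and, in its final form, is exactly the paper's argument: the paper gives no independent proof but simply cites Lemma 2.3 of \cite{BoTo19a} for the single join $M\star_\bfl S^3_\bfw$ and substitutes $M=M^{2k-1}_{\bfl,\bfw}$ with orbifold order $\Upsilon_{\cals_{k-1}}$ from \eqref{sasorder}, $\bfl=\bfl_{k-1}$, $\bfw=\bfw_k$. The preliminary isotropy/freeness discussion you sketch is a reasonable account of what underlies that cited lemma, but it is not needed once you quote it directly.
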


First we note that the homotopy groups can be determined from the homotopy groups of spheres by the long exact homotopy sequence of the torus bundle \eqref{Tkact}; however, for topological computations it is more convenient to write this fibration as
\begin{equation}\label{Tkact2} 
S^3_{\bfw_1}\times\cdots\times S^3_{\bfw_{k}}\ra{2.8} M_{\bfl,\bfw}^{2k+1}\ra{2.8} \mathsf{B}\bbt^{k-1} .
\end{equation}
In particular we have 
$$\pi_1(M_{\bfl,\bfw}^{2k+1})=\{\BOne\},~\pi_2(M_{\bfl,\bfw}^{2k+1})=\bbz^{k-1},~\pi_3(M_{\bfl,\bfw}^{2k+1})=\bbz^{k},~\pi_4(M_{\bfl,\bfw}^{2k+1})=\bbz_2^{k},~\cdots.$$
So as in Lemma 3.2 of \cite{BoTo18c} we have
\begin{lemma}\label{exhomseq}
The (2k+1)-manifolds $M_{\bfl,\bfw}^{2k+1}$ satisfy the following conditions: 
\begin{enumerate}
\item $H_1(M^{2k+1}_{\bfl,\bfw},\bbz)=\pi_1(M_{\bfl,\bfw}^{2k+1})=\{\BOne\}$,
\item $\pi_2(M_{\bfl,\bfw}^{2k+1})=\bbz^{k-1},~\pi_3(M_{\bfl,\bfw}^{2k+1})=\bbz^{k}$,
\item $H^2(M_{\bfl,\bfw}^{2k+1},\bbz)=H_2(M_{\bfl,\bfw}^{2k+1},\bbz)=\bbz^{k-1}$,
\item $H^3(M_{\bfl,\bfw}^{2k+1},\bbz)$ is torsion free.
\item The Betti numbers $b_3(M_{\bfl,\bfw}^{2k+1}),~\cdots b_{2\lfloor\frac{k+2}{2}\rfloor-1}(M_{\bfl,\bfw}^{2k+1})$ and their Poincar\'e duals are even
where $\lfloor\cdot\rfloor$ is the floor function. 
\end{enumerate}
\end{lemma}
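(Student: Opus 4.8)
The plan is to extract everything from the torus‑bundle fibration \eqref{Tkact2}, namely
$$S^3_{\bfw_1}\times\cdots\times S^3_{\bfw_{k}}\ra{2.8} M_{\bfl,\bfw}^{2k+1}\ra{2.8} \mathsf{B}\bbt^{k-1},$$
using both the long exact homotopy sequence and the Leray--Serre spectral sequence with $\bbz$ coefficients. The base $\mathsf{B}\bbt^{k-1}\simeq (\bbc\bbp^\infty)^{k-1}$ is simply connected with $\pi_2=\bbz^{k-1}$ and $H^*(\mathsf{B}\bbt^{k-1},\bbz)=\bbz[a_1,\ldots,a_{k-1}]$, $\deg a_i=2$; the fibre $F=(S^3)^k$ has $\pi_1=0$, $\pi_2=0$, $\pi_3=\bbz^k$, $\pi_4=\bbz_2^k$ (the first nontrivial homotopy group of $S^3$ beyond $\pi_3$ coming from $\eta$), and $H^*(F,\bbz)=\Lambda(e_1,\ldots,e_k)$, $\deg e_j=3$. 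Since $\pi_1(\mathsf{B}\bbt^{k-1})=0$ the action on the fibre cohomology is trivial, so the spectral sequence has $E_2^{p,q}=H^p(\mathsf{B}\bbt^{k-1},\bbz)\otimes H^q(F,\bbz)$, and everything is a matter of bookkeeping.

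First I would read off (1)--(3) from the homotopy sequence $\pi_2(F)\to\pi_2(M)\to\pi_2(\mathsf{B}\bbt^{k-1})\to\pi_1(F)$: since $\pi_1(F)=\pi_2(F)=0$ this gives $\pi_2(M^{2k+1}_{\bfl,\bfw})\cong\pi_2(\mathsf{B}\bbt^{k-1})=\bbz^{k-1}$, and $\pi_1(M)=0$ since both $\pi_1(F)$ and $\pi_1(\mathsf{B}\bbt^{k-1})$ vanish; Hurewicz then gives $H_1=0$ and $H_2(M,\bbz)\cong\pi_2(M)=\bbz^{k-1}$, with $H^2$ the dual by universal coefficients (no torsion in $H_1$). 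Likewise $\pi_3(M)\cong\pi_3(F)=\bbz^k$ because $\pi_3(\mathsf{B}\bbt^{k-1})=0=\pi_4(\mathsf{B}\bbt^{k-1})$; this is exactly Lemma~3.2 of \cite{BoTo18c} and the argument there transcribes verbatim. For (4), torsion‑freeness of $H^3(M,\bbz)$, I would run the spectral sequence in total degree $\le 3$: the only contributions to $H^3$ are $E_\infty^{0,3}$ (a subgroup of $H^3(F,\bbz)=\bbz^k$, hence free) and $E_\infty^{2,1}=0$, $E_\infty^{3,0}=H^3(\mathsf{B}\bbt^{k-1})=0$ (the base has no odd cohomology); the differential $d_4\colon E_4^{0,3}\to E_4^{4,0}=H^4(\mathsf{B}\bbt^{k-1})\otimes\bbz$ is the transgression, a map between free groups, so its kernel $E_\infty^{0,3}$ is free, whence $H^3(M,\bbz)$ is free. (This is the only differential that can hit or leave the relevant boxes since the fibre cohomology is concentrated in degrees $\equiv 0\bmod 3$ on generators.)

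The real work is (5): the evenness of the Betti numbers $b_3,\ldots,b_{2\lfloor(k+2)/2\rfloor-1}$ and of the Poincaré duals of these odd degrees. Here the strategy is to show that in the range of degrees $\le 2\lfloor(k+2)/2\rfloor-1$, equivalently roughly the bottom half, the only way odd‑degree classes in $M$ arise is from the exterior‑algebra generators $e_j\in H^3(F)$ and products $a_I e_J$ that survive to $E_\infty$; the point is that $H^{\mathrm{odd}}(F,\bbz)=\bigoplus_{|J|\ \mathrm{odd}}\bbz\cdot e_J$ and multiplying by even classes $a_I$ from the base preserves parity of $|J|$, so odd cohomology of $M$ in this range is built from the $\{e_j\}$, and the transgressions $d_4(e_j)\in H^4(\mathsf{B}\bbt^{k-1},\bbz)$ are (up to sign) determined by the $c_1$ of the $k$ circle subbundles — these are the classes $p_j$ of \cite{BHLT16,BoTo19a}. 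By the same reasoning as in \cite{BoTo18c}, after the first transgression the surviving odd classes in the stated range pair up: each surviving $e_j$ (equivalently each $e_J$ with $|J|=1$ in the range, and more generally the surviving $e_J$ with $|J|$ odd in this range) comes together with a Poincaré‑dual partner, forcing $b_{2r-1}$ even. I would make this precise by combining the transgression computation with Poincaré duality on $M^{2k+1}$: pairing $H^{2r-1}$ with $H^{2(k-r)+2}$ and using that the latter, in the complementary range, is likewise concentrated in the image of the $a_I$‑multiples of an even‑$|J|$ part, one deduces $b_{2r-1}(M)$ equals the rank of an alternating (skew) form, hence even.

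The main obstacle will be step (5): controlling the spectral sequence precisely enough in the indicated degree range to see that every odd‑degree generator in that range is "paired," i.e. that the relevant $d_4$ (and, in principle, higher differentials $d_5,d_7,\ldots$) act with even‑rank image and cokernel after restriction to the odd part. Concretely one must check that no stray torsion or unpaired class sneaks in below degree $2\lfloor(k+2)/2\rfloor-1$, which is exactly why the bound $2\lfloor(k+2)/2\rfloor-1$ — rather than all degrees — appears; I expect the cleanest route is to cite the analogous Lemma~3.2 of \cite{BoTo18c} and the topological computations of Section~2 of \cite{BoTo19a} wholesale, reducing the proof to verifying that the iterated‑join fibration here has the same $E_2$‑page and the same transgression pattern, the differences being only the explicit weights $\bfw_i,\bfl_j$ which do not affect parity.
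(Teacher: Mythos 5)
Your treatment of (1)--(4) follows essentially the same route as the paper (which simply reads the homotopy groups off the long exact sequence of the fibration \eqref{Tkact2} and then appeals to Lemma 3.2 of \cite{BoTo18c}): the homotopy sequence plus Hurewicz and universal coefficients gives (1)--(3), and your Leray--Serre observation that the only contribution to $H^3(M^{2k+1}_{\bfl,\bfw},\bbz)$ is $E_\infty^{0,3}$, a subgroup of the free group $H^3((S^3)^k,\bbz)=\bbz^k$, is a clean and correct proof of (4) (and is exactly the torsion-freeness that the paper's proof of Lemma \ref{H3H4} takes as input). Up to this point the proposal is sound.

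The genuine gap is in (5), which you yourself flag as the main obstacle. The fibration \eqref{Tkact2} is a purely topological object and by itself carries no mechanism that forces odd Betti numbers to be even; your reduction of $b_{2r-1}$ to ``the rank of an alternating form'' is asserted rather than constructed, and I do not see how to extract such a skew pairing from the transgressions $d_4(e_j)\in H^4(\mathsf{B}\bbt^{k-1},\bbz)$ together with Poincar\'e duality. What the paper (through Lemma 3.2 of \cite{BoTo18c}) actually relies on is the \emph{Sasakian} structure: for a compact Sasakian manifold of dimension $2n+1$ the Betti numbers $b_p$ with $p$ odd and $p\le n$ are even (Fujitani's theorem; see \cite{BG05}, Chapter 7), a consequence of transverse Hodge theory/hard Lefschetz for the basic cohomology of the Reeb foliation. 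That is the missing input; once invoked, (5) is immediate and Poincar\'e duality handles the complementary degrees. One caveat that explains why your direct attack cannot succeed for the statement as printed: the Sasakian theorem covers odd $p\le k$, i.e.\ $p\le 2\lfloor\frac{k+1}{2}\rfloor-1$, whereas the printed bound $2\lfloor\frac{k+2}{2}\rfloor-1$ equals $k+1$ when $k$ is even; already for $k=2$ the join is an $S^3$-bundle over $S^2$ with $b_3=1$, so the range must be read as terminating at $2\lfloor\frac{k+1}{2}\rfloor-1$, and you should not attempt to prove evenness of $b_{k+1}$ for even $k$.
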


\begin{lemma}\label{H3H4}
For the iterated join $M^{2k+1}_{\bfl,\bfw}$ with $k\geq 3$ the following hold:
\begin{enumerate}
\item $H^3(M^{2k+1}_{\bfl,\bfw},\bbz)=0$;
\item the free part of $H^4(M^{2k+1}_{\bfl,\bfw},\bbz)$ is $\bbz^{\frac{k(k-3)}{2}}$.
\end{enumerate}
\end{lemma}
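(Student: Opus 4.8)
The plan is to compute $H^*(M^{2k+1}_{\bfl,\bfw},\bbz)$ in low degrees from the Serre spectral sequence of the fibration \eqref{Tkact2},
$$S^3_{\bfw_1}\times\cdots\times S^3_{\bfw_{k}}\longrightarrow M^{2k+1}_{\bfl,\bfw}\longrightarrow \mathsf{B}\bbt^{k-1}.$$
Here $\mathsf{B}\bbt^{k-1}=(\bbc\bbp^\infty)^{k-1}$ is simply connected with $H^*(\mathsf{B}\bbt^{k-1},\bbz)=\bbz[t_1,\dots,t_{k-1}]$, $\deg t_i=2$, while the fiber $F=(S^3)^k$ has $H^*(F,\bbz)=\Lambda(a_1,\dots,a_k)$ with $\deg a_j=3$ and no torsion; hence $E_2^{p,q}=\bbz[t_1,\dots,t_{k-1}]\otimes\Lambda(a_1,\dots,a_k)$, with $t_i$ in bidegree $(2,0)$ and $a_j$ in bidegree $(0,3)$. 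Since the base cohomology is concentrated in even degrees and the fiber cohomology in degrees divisible by $3$, the only differential affecting total degrees $\le 4$ is the transgression $d_4\colon E_4^{0,3}\to E_4^{4,0}$, and in these degrees the spectral sequence degenerates at $E_5$. Moreover each of the antidiagonals $p+q=3$ and $p+q=4$ of $E_\infty$ has a single possibly-nonzero entry, namely $E_\infty^{0,3}$ and $E_\infty^{4,0}$ (because $H^j(\mathsf{B}\bbt^{k-1})=0$ for $j$ odd and $H^j(F)=0$ for $j=1,2,4$), so there is no extension problem and
$$H^3(M^{2k+1}_{\bfl,\bfw},\bbz)=\ker d_4,\qquad H^4(M^{2k+1}_{\bfl,\bfw},\bbz)=\operatorname{coker}\!\big(d_4\colon\bbz^k\to H^4(\mathsf{B}\bbt^{k-1},\bbz)\big),$$
with $H^4(\mathsf{B}\bbt^{k-1},\bbz)\cong\bbz^{\binom{k}{2}}$ free.

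The next step is to compute the transgression explicitly. Realizing $M^{2k+1}_{\bfl,\bfw}$ as the free Borel quotient of $(S^3)^k$ by the $\bbt^{k-1}$-action \eqref{Tk0}, \eqref{Tkactsph}, \eqref{Tkk}, the ($\bbt^{k-1}$-equivariant) projection onto the $j$-th factor identifies $a_j$ as pulled back from the $S^3$-bundle $S(L_{\chi_j^0}\oplus L_{\chi_j^\infty})\to\mathsf{B}\bbt^{k-1}$, where $\chi_j^0,\chi_j^\infty\in H^2(\mathsf{B}\bbt^{k-1},\bbz)$ are the two weights of the action on the $j$-th $\bbc^2$. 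By naturality of the transgression together with the Gysin sequence of that $S^3$-bundle, $d_4(a_j)$ is the Euler class $\chi_j^0\chi_j^\infty$. Reading the weights off \eqref{Tk0}, \eqref{Tkactsph}, \eqref{Tkk} gives $d_4(a_1)=(l_1^\infty)^2w_1^0w_1^\infty\,t_1^2$, $d_4(a_k)=(l_{k-1}^0)^2w_k^0w_k^\infty\,t_{k-1}^2$, and for $2\le j\le k-1$
$$d_4(a_j)=\big(m_jv_j^0t_j-l_{j-1}^0w_j^0t_{j-1}\big)\big(m_jv_j^\infty t_j-l_{j-1}^0w_j^\infty t_{j-1}\big),$$
extended to $E_2$ as a derivation.

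The crux is then to show $d_4\colon\bbz^k\to H^4(\mathsf{B}\bbt^{k-1},\bbz)$ is injective, which is exactly where $k\ge 3$ enters: for $k=2$ both $d_4(a_1)$ and $d_4(a_2)$ are multiples of $t_1^2$ and injectivity fails. For $k\ge 3$ the key observation is that for $2\le j\le k-1$ the mixed monomial $t_jt_{j-1}$ appears in $d_4(a_j)$ with coefficient $-m_jl_{j-1}^0(v_j^0w_j^\infty+v_j^\infty w_j^0)\ne 0$ and in no other $d_4(a_i)$, while $d_4(a_1)$ and $d_4(a_k)$ are nonzero multiples of the pure squares $t_1^2$ and $t_{k-1}^2$. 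Hence any relation $\sum_j c_j d_4(a_j)=0$ forces $c_2=\dots=c_{k-1}=0$ (from the $t_jt_{j-1}$-coefficients) and then $c_1=c_k=0$ (from the $t_1^2$- and $t_{k-1}^2$-coefficients), so $d_4$ is injective already over $\bbq$. Therefore $\ker d_4=0$, proving (1), and the free rank of $\operatorname{coker} d_4$ is $\binom{k}{2}-k=\tfrac{k(k-3)}{2}$, proving (2). (The torsion of $\operatorname{coker} d_4$, read off from the elementary divisors of the integral matrix of $d_4$, is what accounts for the infinitely many diffeomorphism types mentioned above.) I expect the only genuinely delicate point to be the bookkeeping in extracting the weights $\chi_j^0,\chi_j^\infty$ from \eqref{Tk0}, \eqref{Tkactsph}, \eqref{Tkk}, and hence the exact forms $d_4(a_j)$; once those are pinned down, the linear-independence argument through the cross terms is immediate.
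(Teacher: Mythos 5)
Your argument is correct, and for part (1) it takes a genuinely different route from the paper. The paper proves (1) by passing to the circle bundle $S^1\to M^{2k+1}_{\bfl,\bfw}\to M_k(A)$ over the Bott manifold quotient, working over $\bbq$ (legitimate since $H^3$ is torsion free by Lemma \ref{exhomseq}), and showing that a surviving class $\gra\otimes u$ would force $Cu=0$ for a $k\times\frac{k(k-1)}{2}$ matrix $C$ of rank $k$ or $k-1$; the rank $k-1$ case is then excluded by appealing to the evenness of $b_3$ from Lemma \ref{exhomseq}(5). You instead stay entirely within the Borel fibration \eqref{Tkact2}, identify $d_4(a_j)$ as the Euler class $\chi_j^0\chi_j^\infty$ of the rank-two bundle determined by the weights in \eqref{Tk0}, \eqref{Tkactsph}, \eqref{Tkk}, and prove injectivity of $d_4$ directly from the cross terms $t_jt_{j-1}$. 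Your weight bookkeeping checks out (e.g.\ $d_4(a_k)=(l_{k-1}^0)^2w_k^0w_k^\infty t_{k-1}^2$ is consistent with the torsion class $\bbz_{w_2^0w_2^\infty(l_2^0)^2}$ in Theorem \ref{maintopthm} for $k=3$), and the degeneration/no-extension analysis in total degrees $3$ and $4$ is right. For part (2) the paper uses the same fibration as you but deduces injectivity of $d_4$ from (1) rather than the other way around. What your approach buys: it is self-contained (no appeal to the parity of $b_3$ or to the somewhat terse rank claim for $C$), it makes the role of the hypothesis $k\geq 3$ transparent (for $k=2$ both transgressions land on $t_1^2$), and the explicit integral matrix of $d_4$ additionally yields the torsion of $H^4$, which the paper only records separately in the $k=3$ case. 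What the paper's route buys is that the quotient-space spectral sequence connects $H^3$ directly to the cohomology ring of the Bott manifold, which is the structure the rest of the paper is organized around.
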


\begin{proof}
Since $H^3(M^{2k+1}_{\bfl,\bfw},\bbz)$ is torsion free, it suffices to work over $\bbq$. Consider the Leray-Serre cohomology spectral sequence of the bundle $S^1\ra{1.5}M^{2k+1}\ra{1.5}M_{k}(A)$. The only 3 dimensional classes in the $E^2$ term take the form $\gra\otimes u$ where $\gra$ is the 1 dimensional class of the fiber and $u$ a 2 dimensional class on the base. If such a 3 dimensional class were to survive to $E^\infty$ we would have $d_2(\gra\otimes u)=0$. But $d_2(\gra)$ is the K\"ahler class so $d_2(\gra)=\sum_{j=1}^{k}c_jx_j$ where $\{x_j\}$ is a basis for $H^2(M_{k}(A),\bbq)$ and $c_j>0$. Now the cohomology ring of the Bott manifold $M_{k}(A)$ is the polynomial ring $\bbq[x_1,\ldots,x_{k}]$ modulo the ideal generated by $x_j^2+\sum_{i=1}^{j-1}A^i_jx_ix_j$.
Using this we see that $d_2(\gra\otimes u)=0$ gives the system of equations
$$c_ju_i+c_iu_j-A^i_jc_ju_j=0$$
which can be written in matrix form $Cu=0$ where $C$ is a $k$ by $k(k-1)/2$ matrix. Since $c_j>0$ for all $j$ it is not difficult to see that $C$ has at least $k-1$ independent columns. So the rank of $C$ is either $k$ or $k-1$. If it is $k$ we get $u=0$, and if it is $k-1$ there is precisely one non-zero solution which contradicts the fact that $b_3$ is even. This proves (1).	

To prove (2) we consider the Leray-Serre spectral sequence of the fibration \eqref{Tkact2}. Now $H^4(\mathsf{B}\bbt^{k-1},\bbz)=\bbz^{\frac{k(k-1)}{2}}$, and by (1) none of the 3 dimensional classes on the fiber survive to $E^\infty$,  so we must have $\frac{k(k-1)}{2}-k=\frac{k(k-3)}{2}$ 4 dimensional  free classes that survive to $E^\infty$.
\end{proof}

We also correct a small error in Theorem 3.1 of \cite{BoTo18c}, namely that the torsion group $\bbz_{l_2^2m^2v^0v^\infty}$ of $M^7$ should be $\bbz_{l_2m^2v^0v^\infty}$, that is $l_2$ should only occur to the first power. The error was that it is the $d_2$ differential of the 3-class $\grb$, not the $d_4$ differential that occurs in the $E_2$ term of the spectral sequence. We have $d_2(\grb)=m^2v^0v^\infty s_1$. The same argument clearly holds for the general 7-dimensional  cscS case, namely we have

\begin{theorem}\label{maintopthm}
The 7-manifolds $M^7=(S^3\star_{\bfl_1}S^3_{\bfw_1})\star_{\bfl_2}S^3_{\bfw_2}$ have the rational cohomology of the connected sum $(S^2\times S^5)\# (S^2\times S^5)$. Furthermore, the only torsion that occurs is $H^4(M^7,\bbz)\approx \bbz_{v^0v^\infty m^2l_2^\infty}\oplus \bbz_{w_2^0w_2^\infty (l_2^0)^2}$ where $\xi_\bfv$ with $\bfv=(v^0,v^\infty)$ is the Reeb vector field of the quasi-regular cscS metric on $S^3\star_{\bfl_1}S^3_{\bfw_1}$.
\end{theorem}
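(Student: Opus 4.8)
The plan is to compute the cohomology of $M^7$ by running the Leray--Serre spectral sequence of the torus bundle
$$\bbt^2\longrightarrow S^3_{\bfw_1}\times S^3_{\bfw_2}\longrightarrow M^7$$
in the equivalent form $S^3_{\bfw_1}\times S^3_{\bfw_2}\to M^7\to \mathsf{B}\bbt^2$, exactly as in the proof of Lemma~\ref{H3H4} but now keeping careful track of torsion rather than working over $\bbq$. First I would record the rational statement: by Lemma~\ref{exhomseq} we have $b_1=0$, $b_2=1$, $H^3$ torsion free, and by Lemma~\ref{H3H4}(1) (with $k=2$ the dimension count there degenerates, so one argues directly) $H^3(M^7,\bbq)$ has rank $2$ coming from the two $3$-spheres, while $H^5$ is Poincar\'e dual to $H^2$; this gives the rational cohomology of $(S^2\times S^5)\#(S^2\times S^5)$. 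The point is that $M^7$ is a smooth simply connected $7$-manifold with $b_2=b_5=1$, $b_3=b_4=2$ rationally, so the only room for torsion is in $H^4$ (equivalently, by universal coefficients and Poincar\'e duality, the torsion of $H^4$ equals the torsion of $H_3$).

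Next I would pin down that torsion via the $d_2$ differentials. Write $\gra$ for the fiber $2$-class dual to the $S^1$-factor associated to the first join parameter $\bfl_1$, and $\grb$ for the $3$-class of $S^3_{\bfw_1}$ (carrying the quasiregular cscS structure with Reeb field $\xi_\bfv$), and similarly $\gamma$ for the $3$-class of $S^3_{\bfw_2}$. From the torus action \eqref{Tk0}, \eqref{Tkactsph}, \eqref{Tkk} with $k=2$ and the relations $l^\infty_{j-1}=m_js_j$, $s_j=\gcd(l^\infty_{j-1},|w^0_jv^\infty_j-w^\infty_jv^0_j|)$, the transgressions read off as $d_2(\grb)=m^2 v^0 v^\infty s_1$ (this is the corrected computation replacing the erroneous $d_4$ claim in Theorem~3.1 of \cite{BoTo18c}) and analogously $d_2(\gamma)=w_2^0 w_2^\infty (l_2^0)^2 s_2'$ for the appropriate integer. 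Tensoring the surviving $2$-class $\gra$ against these relations on the base $\mathsf{B}\bbt^2$ — whose cohomology ring is a polynomial ring on the two degree-$2$ generators — shows that the cokernel in degree $4$ that produces torsion in $H^4(M^7,\bbz)$ is precisely $\bbz_{v^0 v^\infty m^2 l_2^\infty}\oplus \bbz_{w_2^0 w_2^\infty (l_2^0)^2}$, once one substitutes $m s_1 = l_1^\infty$-type identities to re-express the first summand as $v^0v^\infty m^2 l_2^\infty$. I would double-check the exponents by comparing with the already-established $M^7$ torsion formula $\bbz_{l_2 m^2 v^0 v^\infty}$ for the regular base case and verifying the stated correction ($l_2$ to the first power) is consistent with $d_2(\grb)=m^2v^0v^\infty s_1$.

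The main obstacle is bookkeeping the $d_2$ differentials precisely enough to get the exact torsion orders — in particular disentangling which power of $l_2^0$ and which of $m$ appears, and confirming there are no further differentials ($d_3$, $d_4$) or extension problems contributing. For the no-further-differentials part I would use that all the relevant classes on the base live in even total degree and the only odd classes on the fiber are $\grb,\gamma$ in degree $3$ and their product in degree $6$, so the spectral sequence degenerates at $E_3$ in the range that matters; for the extension problem in $H^4$ one notes the two torsion contributions come from independent summands of $H^*(\mathsf{B}\bbt^2)$ tensored with distinct fiber classes, so the extension splits. Finally I would invoke Lemma~\ref{smoothlem} to note the smoothness hypothesis is what makes $M^7$ a manifold so that Poincar\'e duality applies cleanly, and assemble the pieces into the stated description of $H^*(M^7,\bbz)$.
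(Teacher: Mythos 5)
Your overall strategy --- run the Leray--Serre spectral sequence of the torus fibration and read the torsion of $H^4$ off the transgressions of the fiber $3$-classes --- is the right one, and it is essentially what the paper does (the paper gives no self-contained proof; it defers to the argument of Theorem 3.1 of \cite{BoTo18c}, recording only the correction $d_2(\grb)=m^2v^0v^\infty s_1$). But your execution contains errors that would derail the computation. First, $M^7$ is the \emph{stage three} join, so the relevant fibration is $S^3\times S^3_{\bfw_1}\times S^3_{\bfw_2}\to M^7\to\mathsf{B}\bbt^2$ with \emph{three} sphere factors; your total space $S^3_{\bfw_1}\times S^3_{\bfw_2}$ is $6$-dimensional and cannot be a $\bbt^2$-bundle over a $7$-manifold. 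This off-by-one propagates: you assert $b_2=1$ and $b_3=2$, whereas Lemma \ref{exhomseq}(3) with $k=3$ gives $H^2(M^7,\bbz)=\bbz^2$, and Lemma \ref{H3H4}(1) (whose hypothesis $k\geq 3$ is satisfied here, so no ``degenerate'' case arises) gives $H^3(M^7,\bbz)=0$. Indeed your claimed Betti numbers $b_2=b_5=1$, $b_3=b_4=2$ are not those of $(S^2\times S^5)\#(S^2\times S^5)$ (which has $b_2=b_5=2$, $b_3=b_4=0$), so your intermediate step contradicts the very statement you are proving.

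Second, the differential bookkeeping is inconsistent. In the fibration over $\mathsf{B}\bbt^2$ the fiber is a product of $3$-spheres, so $H^1$ and $H^2$ of the fiber vanish and $d_2$, $d_3$ are identically zero on $E^{0,3}$; the three fiber $3$-classes transgress via $d_4$ into $H^4(\mathsf{B}\bbt^2,\bbz)\cong\bbz^3$, and it is the cokernel of that $d_4$ that produces the torsion in $H^4(M^7,\bbz)$. The identity $d_2(\grb)=m^2v^0v^\infty s_1$ quoted in the paper lives in a \emph{different} spectral sequence (the one used in \cite{BoTo18c}, where $\grb$ is a $3$-class of the intermediate $5$-manifold in a circle-bundle fibration); importing it verbatim into the $\mathsf{B}\bbt^2$ fibration, as you do, is not meaningful, and your proposed $d_2(\gamma)=w_2^0w_2^\infty(l_2^0)^2 s_2'$ is asserted rather than derived. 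To complete the argument you would need either to carry out the $d_4$ computation over $\mathsf{B}\bbt^2$ honestly (writing the three transgressions as integral quadratic forms in the two degree-two generators and computing the torsion of the quotient), or to set up the same iterated circle-bundle spectral sequences as in \cite{BoTo18c} and verify the corrected $d_2$ there; at present neither computation is actually performed.
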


\section{Iterated Joins and K\"ahler Bott Orbifolds}

\subsection{The Orbifold Quotients} 
We want to understand the quotient orbifolds of a quasi-regular Sasakian structure in the $\bfw$ Sasaki cone of an iterated join. To do so we consider the objects of $\calb\calo$ to have a fixed K\"ahler orbifold structure. Thus, the morphisms of $\calb\calo$ will also be maps of the K\"ahler structures. We shall denote the objects in $\calb\calo_0$ by $\calk_{\bfm,\bfn}$.

\begin{theorem}\label{itjoinsasthm}
Let $M_{\bfl,\bfw}^{2n+1}$ be an iterated $S^3$-join of height $n$. Then for each $k\in\{2,\ldots,n\}$ the $S^1$ orbifold quotient of a quasi-regular Sasakian structure $\cals_{\bfl,\bfw,\bfv}$ with Reeb vector field $\xi_\bfv\in \gt^+_\bfw$ on $M_{\bfl,\bfw}$ is a K\"ahler Bott orbifold $\calk_{\bfm_k,\bfn_k}$ of the form $(M_k(A(k)),\grD^T_\bfm,\gro_k)$ with $\grD^T_\bfm$ given by Equation \eqref{Bottbranchdiv} and $A(k)$ has the block form
$$\Bigl(
\begin{array}{c|c}
A(k-1) & 0 \\
\hline
A(k)^1_k \cdots A(k)^{k-1}_k & 1
\end{array}
\Bigr)$$
where $A(k)$ is a matrix representative for the Bott manifold $M_{k}$. Moreover, the line orbibundle $L_k$ defining the Bott tower $(M_k(A(k)),\grD^T_{\bfm_k},\gro_k)\fract{\pi}{\longrightarrow}(M_{k-1}(A(k-1)),\grD^T_{\bfm_{k-1}},\gro_{k-1})$ satisfies 
\begin{equation}\label{c1Kah}
c_1(L_k^{\Upsilon_{k-1}})=\sum_{j=1}^{k-1}A(k)^j_kx_j=n_k[\gro_{k-1}]_I, 
\end{equation}
and is determined by the Reeb vector field $\xi_{\bfv_{k-1}}$ where $\bfm_k=m_k(v^0_k,v^\infty_k)$,
\begin{equation}\label{Kahclasseqn}
m_k =\frac{l^\infty_{k-1}}{s_k}, \qquad n_k=\frac{l^0_{k-1}}{s_k}(w_k^0v_k^\infty-w_k^\infty v_k^0), \qquad s_k=\gcd(l^\infty_{k-1},|w^0_kv^\infty_k-w^\infty_kv^0_k|).
\end{equation}
\end{theorem}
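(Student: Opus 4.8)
The plan is to prove Theorem \ref{itjoinsasthm} by induction on $k$, working stage by stage up the iterated join tower \eqref{itjoin}. The base case $k=2$ is essentially the content of \cite{BoTo19a}: the $S^1$-quotient of a quasi-regular Sasakian structure on $S^3_{\bfw_1}\star_{\bfl_1}S^3_{\bfw_2}$ with Reeb field $\xi_{\bfv_1}$ is a $\bbc\bbp^1[\bfw]$-orbibundle over $\bbc\bbp^1[\bfm_2]$, i.e. a height-$2$ Bott orbifold tower, with the line orbibundle data read off from the weights exactly as in \eqref{Kahclasseqn}. For the inductive step, I would use the foliation structure recalled just before the theorem: the subbundle $\cale_M=\cald_M\oplus L_{\xi_{\bfw_k}}$ is integrable with leaves carrying the Sasakian structure of the previous stage $M^{2k-1}_{\bfl,\bfw}$ up to transverse scaling. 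Quotienting by $S^1$ generated by $\xi_{\bfv}$ then exhibits $\calk_{\bfm_k,\bfn_k}$ as the total space of a fiber bundle over $\calk_{\bfm_{k-1},\bfn_{k-1}}=(M_{k-1}(A(k-1)),\grD^T_{\bfm_{k-1}},\gro_{k-1})$ — this is precisely the commutative diagram \eqref{s2comdia} applied at stage $k$ — with fiber the quotient of $S^3_{\bfw_k}$, namely $\bbc\bbp^1(v^0_k,v^\infty_k)/\bbz_{m_k}$. That this fiber bundle is the projectivization $\bbp_\bfm(\BOne\oplus L_k)$ of a line orbibundle follows because the $S^3_{\bfw_k}$ factor contributes a two-dimensional quotient of the contact distribution, which is the "cone reducible" hypothesis we imposed; thus by Definition \ref{Bottorbtow} the result is again a Bott orbifold tower, and the block form of $A(k)$ is immediate since $A(k-1)$ is unchanged and the new row records the twisting of $L_k$.

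The substantive computation is identifying $L_k$, i.e. establishing \eqref{c1Kah} and \eqref{Kahclasseqn}. For this I would pass to the torus-bundle picture \eqref{Tkact} and track how $\theta_{k-1}$ enters the $k$th and $(k-1)$st spheres via \eqref{Tkactsph}, \eqref{Tkk}. The point is that after quotienting by the $S^1$ generated by $\xi_{\bfv}$ at stage $k$, the residual $S^1$-action whose quotient is $M_{k-1}$ is generated by a combination of $\xi_{\bfv_{k-1}}$ and $\xi_{\bfw_k}$; the line orbibundle $L_k$ over $M_{k-1}$ is the associated complex line bundle of this circle bundle restricted appropriately, twisted by the weighted $\bbc\bbp^1$ structure on the fiber. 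Computing its orbifold first Chern class amounts to computing the transition data, which is governed by the integers $l^\infty_{k-1}, l^0_{k-1}, w^0_k, w^\infty_k, v^0_k, v^\infty_k$; the $\gcd$ $s_k$ appears because one must divide out the common factor to get a primitive generator, giving $m_k=l^\infty_{k-1}/s_k$ and the twist $n_k=\frac{l^0_{k-1}}{s_k}(w^0_kv^\infty_k-w^\infty_kv^0_k)$. The factor $\Upsilon_{k-1}$ in $c_1(L_k^{\Upsilon_{k-1}})$ enters because $[\gro_{k-1}]_I$ is the primitive Kähler class at stage $k-1$, while $c_1(L_k)$ itself is only a rational multiple of it; raising to the power $\Upsilon_{k-1}$ (the orbifold order \eqref{sasorder} of $\cals_{k-1}$) clears denominators so that both sides are integral cohomology classes on $M_{k-1}$. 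I would verify \eqref{c1Kah} by comparing with the curvature of the connection induced by $\eta_{\bfl,\bfw}$ and using that $d\eta_{\bfl,\bfw}$ pulls back to $[\gro_{k-1}]$ on the leaves.

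The main obstacle I anticipate is not any single step but keeping the bookkeeping honest: the join operation is non-associative (as noted after \eqref{itjoin}), the weights $\bfw_i, \bfl_j, \bfv_j$ proliferate, and one must be careful that the "transverse scaling" ambiguity in the leafwise Sasakian structure does not affect the integral class $c_1(L_k^{\Upsilon_{k-1}})$ — it rescales $\gro_{k-1}$ but the product $c_1(L_k^{\Upsilon_{k-1}})=n_k[\gro_{k-1}]_I$ with $[\gro_{k-1}]_I$ \emph{primitive} is scale-invariant. A secondary subtlety is the appearance of $\bbz_{m_k}$ quotients in the fiber: one must check that the relation $l^\infty_{k-1}=m_k s_k$ and $s_k=\gcd(l^\infty_{k-1},|w^0_kv^\infty_k-w^\infty_kv^0_k|)$, already recorded in the description of the action \eqref{Tkactsph}--\eqref{Tkk}, is exactly what makes $(v^0_k,v^\infty_k)$ primitive and $m_k$ the cyclic orbifold uniformizing group of the fiber, so that $\bfm_k=m_k(v^0_k,v^\infty_k)$ is consistent with Definition \ref{Bottorbtow}. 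Granting the stage-$2$ case from \cite{BoTo19a} and Lemma \ref{c1it} for the recursive structure of $c_1^{orb}$, the induction then closes.
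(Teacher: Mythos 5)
Your proposal is correct and follows essentially the same route as the paper: induction on the stage $k$, identifying the quotient at each stage as a projective orbibundle over the previous Bott orbifold with fiber $\bbc\bbp^1(v_k^0,v_k^\infty)/\bbz_{m_k}$, and reading off $L_k$, $m_k$, $n_k$, $s_k$ from the join weights. The only difference is one of emphasis: the paper outsources the identification of the quotient and the K\"ahler class recursion to Theorem 2.7 and Lemma 2.11 of \cite{BoTo19a}, whereas you sketch how those facts would be re-derived from the torus action \eqref{Tkactsph}--\eqref{Tkk}; the substance is the same.
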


\begin{proof}
The proof is by induction. At each stage the quasi-regular Sasakian structure has a Reeb vector field given by $\xi_{\bfv_k}\in\gt^+_{\bfw_k}$. For $k=1$ the Bott orbifold is $\bbc\bbp^1[\bfw_1]=(\bbc\bbp^1,\grD_{\bfw_1})$ with $w_1^0,w_1^\infty$ coprime, i.e. $m_1=1$.  Assume that the theorem is true for $k-1$. Then, it follows from the definition of the join construction that for any (quasi-regular) Sasakian structure in $\gt^+_\bfw$ we can associate the Sasakian structure $\cals_\bfm$. Then it follows from Equation \eqref{Bottbranchdiv}, and the form of $A(k)$ that the hypothesis holds for $k$. Moreover, the relation between the orbifold K\"ahler classes $\pi^*[\gro_{k-1}]$ and $[\gro_k]$ is given by Lemma 2.11 of \cite{BoTo19a}. Thus, the integral K\"ahler class $[\gro_k]_I$ on $M_{k}(A_{k})$ satisfies the recursion relation
\begin{equation}\label{KahclassBott}
[\gro_k]_I= m_kl_{k-1}^0w_k^0v_k^\infty \pi^*[\gro_{k-1}]_I +m_ks_k\Upsilon_{\cals_{k-1}}x_k.
\end{equation}
Since the CR structure is fixed $\eta_k$ uniquely determines $\gro_k$.
The result now follows from Theorem 2.7 of \cite{BoTo19a}.
\end{proof}

We do not need to choose the Reeb vector field $\xi_{\bfv_n}$ at the last stage to be quasi-regular.

\begin{remark}\label{KSremark}
Not all Bott orbifolds occur as quotients of quasi-regular Reeb vector fields in the $\bfw$ Sasaki cone of an iterated join. Equation \eqref{c1Kah} places restrictions on the entrees of the matrix $A$. Proposition 4.13 and Example 4.14 of \cite{BHLT16} describe explicit examples of this general phenomenon. The well known Koiso-Sakane manifold \cite{KoSa86}, which is a special case of a 
Bott manifold (\cite{BoCaTo17}, page 53), gives a particular example.
\end{remark}

Theorem \ref{itjoinsasthm} gives rise to a functor $F^k:\cals\calj^k\rightrightarrows \calb\calo^k$ defined by $F^k(\cals_{\bfl_k,\bfw_k,\bfv_k})=\calk_{\bfm_k,\bfn_k}$ with the relations \eqref{Kahclasseqn} understood. We shall often drop the superscripts and subscripts $k$ when it is clear from the context that we are fixing the stage $k$. The Koiso-Sakane Bott manifold shows that this functor is not essentially surjective, but it is left invertible since $F(f)$ is in $\calb\calo_1$ and the composition rules of a (covariant) functor hold. One can check that $\cals\calj$ is a {\it fibered category} \cite{Vis05} over $\calb\calo$ in the sense that given morphisms $f,\tf'\in\cals\calj_1$ there exists a unique morphism $\tf$ such that the diagram

\begin{equation}\label{cartfib}
\xymatrix{
\cals_{\tilde{\bfl},\tilde{\bfw},\tilde{\bfv}} \ar[d]_F  \ar@/^/[drr]^{\tf'}
\ar@{.>}[dr]_\tf |-{} \\
\calk_{\tilde{\bfm},\tilde{\bfn}} \ar[dr]_h \ar@/^/[drr] & \cals_{\bfl,\bfw,\bfv} \ar[d]_F \ar[r]_f
& \cals_{\bfl',\bfw',\bfv'} \ar[d]_F \\
& \calk_{\bfm,\bfn} \ar[r]^{F(f)} & \calk_{\bfm',\bfn'} }
\end{equation}
commutes. The morphism $f$ is called a {\it Cartesian} morphism. Given $\calk_{\bfm,\bfn}\in \calb\calo_0$ we denote by $\cals\calj(\calk_{\bfm,\bfn})$ the fiber over the object $\calk_{\bfm,\bfn}$ which is a subcategory of $\cals\calj$ whose objects are the objects $\cals_{\bfl,\bfw,\bfv}$ of $\cals\calj$ such that $F(\cals_{\bfl,\bfw,\bfv})= \calk_{\bfm,\bfn}$ and whose morphisms $f$ satisfy $F(f)={\rm id}_{\calk_{\bfm,\bfn}}$. By Proposition 2.21 of \cite{BoTo19a} that $F$ has a left inverse at each stage $k$. This implies that the subcategory $F(\cals_{\bfl,\bfw,\bfv})$ has only one object, and one morphism, the identity. However, there is another subcategory whose set $F(\cals_{\bfl,\bfw,\bfv})_0$ of objects is a single point $\{\cals_{\bfl,\bfw,\bfv}\}$, but whose morphisms are the morphisms of $\cals\calj^k$ whose source and target are $\{\cals_{\bfl,\bfw,\bfv}\}$. This is precisely the isotropy subgroup $\gA\gu\gt(\cals_{\bfl,\bfw,\bfv})$ at $\cals_{\bfl,\bfw,\bfv}\in\cals\calj_0$, and $F$ maps these morphisms to the isotropy subgroup $\gA\gu\gt(\calk_{\bfm,\bfn})$ at $\calk_{\bfm,\bfn}\in\calb\calo_0$. Note that Remark \ref{KSremark} shows that $\cals\calj(\calk_{\bfm,\bfn})$ can be empty.

\begin{example}\label{fibinv}
For any element $\xi_\bfv\in\gt^+_\bfw$ we consider the involution sending $\bfv=(v^0,v^\infty)$ to $\bfv^\perp=(v^\infty,v^0)$.
This induces an involution of joins $M_{\bfl_k,\bfw_k}\leftrightarrows M_{\bfl_k,\bfw^\perp_k}$ at stage $k$ giving rise to bijections $\cals_{\bfl_k,\bfw_k,\bfv_k}\leftrightarrows \cals_{\bfl_k,\bfw^\perp_k,\bfv^\perp_k}$. Applying the functor $F$ gives the $k$th fiber inversion map $\grt_k\in \calb\calo^k_1$ for the corresponding Bott orbifolds.
\end{example}


\subsection{Constant Scalar Curvature}
The natural setting for the join construction is the orbifold category \cite{BGO06,BHLT16}, and the basic theorem on constant scalar curvature Sasaki (cscS) metrics is Theorem 3.2 of \cite{BoTo19a}:

\begin{theorem}\label{admjoincsc}
Let $M_{\bfl,\bfw}=M\star_\bfl S^3_\bfw$ be the $S^3_\bfw$-join with a quasi-regular Sasaki orbifold $M$ which is an $S^1$ orbibundle over a compact K\"ahler orbifold $N$ with constant scalar curvature $s_N$. Then for each vector $\bfw=(w^0,w^\infty)\in \bbz^+\times\bbz^+$ with relatively prime components satisfying $w^0>w^\infty$ there exists a Reeb vector field $\xi_\bfv$ in the 2-dimensional $\bfw$-Sasaki cone on $M_{\bfl,\bfw}$ such that the corresponding ray of Sasakian structures $\cals_a=(a^{-1}\xi_\bfv,a\eta_\bfv,\Phi,g_a)$ has constant scalar curvature. Moreover, if $s_N\geq 0$, then the $\bfw$-Sasaki cone $\gt^+_\bfw$ is exhausted by extremal Sasaki metrics. If $s_N>0$ and $l^\infty$ is sufficiently large then the $\bfw$-cone has at least 3 cscS rays.
\end{theorem}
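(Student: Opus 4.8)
The plan is to reduce the statement to the existence and extremality theory for the generalized Calabi construction (admissible Sasakian metrics on $S^3_\bfw$-joins), exactly as developed in \cite{BoTo14a,BoTo19a}. First I would recall that a quasi-regular Sasaki orbifold $M$ which is an $S^1$-orbibundle over a compact cscK orbifold $(N,\gro_N,s_N)$ makes $M_{\bfl,\bfw}=M\star_\bfl S^3_\bfw$ into an admissible $S^1$-orbibundle over the admissible K\"ahler orbifold obtained from the Calabi construction over $N\times\bbc\bbp^1[\bfw]$; the transverse K\"ahler geometry of any ray in the $\bfw$-Sasaki cone is governed by a single smooth function $\Theta$ on the closed interval $[-1,1]$ subject to the positivity and boundary (endpoint) conditions. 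The transverse scalar curvature of such a metric, written in the momentum variable $\mathfrak z\in[-1,1]$, is an affine-linear expression whose coefficients depend on $\bfw$, $\bfl$, $s_N$ and on the choice of Reeb field; constancy of the Sasaki scalar curvature is equivalent to a linear second-order ODE for $\Theta$ whose solution, with the correct endpoint conditions, exists precisely when the free parameter (the ``extremal polynomial'' normalization) is chosen so that a certain quadratic/cubic in the Reeb parameter has a root in the admissible interval. This is the content of Theorem 3.2 of \cite{BoTo19a}, which I would invoke verbatim for the first two sentences.

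For the first refinement — if $s_N\ge 0$ then the entire $\bfw$-cone $\gt^+_\bfw$ is exhausted by extremal representatives — I would argue as in \cite{ACGT08,BoTo14a}: extremality in the admissible setting is equivalent to the extremal polynomial $F_\Theta$ being a solution of the relevant linear ODE with prescribed endpoint values, and the obstruction to solving this for \emph{every} ray is a sign condition on the coefficients of a degree-$(\le 3)$ polynomial on $[-1,1]$. When $s_N\ge 0$ all the relevant coefficients retain favorable signs across the whole cone (the contribution from $N$ never works against the contribution from the $\bbc\bbp^1[\bfw]$ factor), so the polynomial is automatically positive on the open interval and the endpoint conditions can be met; hence every Reeb field in $\gt^+_\bfw$ admits an extremal representative. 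I would cite the precise positivity lemma from \cite{BoTo14a} (the analogue for Sasaki joins of the Hwang–Simanca / ACGT criterion) rather than re-derive it.

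The last and hardest clause — if $s_N>0$ and $l^\infty$ is sufficiently large then there are at least three cscS rays — is where the real work lies, and I expect it to be the main obstacle. Here one substitutes the cscS condition (not merely extremality) into the Calabi construction: the constant-scalar-curvature equation becomes, after integrating the ODE against the moment measure, a single polynomial equation $p_{\bfl,\bfw,s_N}(x)=0$ in the normalized Reeb parameter $x$, of degree $3$, whose coefficients are explicit rational functions of $l^0,l^\infty,w^0,w^\infty$ and $s_N$. One then must show that as $l^\infty\to\infty$ (with the other data fixed, $w^0>w^\infty$), this cubic acquires three roots lying in the admissible open interval. The mechanism is the standard one from \cite{BoTo13,BoTo14a,BHLT16,BoTo19a}: after rescaling, the cubic degenerates to a polynomial whose leading behavior is controlled by the $s_N>0$ term, creating an interval on which $p$ changes sign three times (a local ``wiggle'' produced by the competition between the linear term coming from $S^3_\bfw$ and the cubic term coming from the large-$l^\infty$ limit), so by the intermediate value theorem there are three real roots in range for all $l^\infty$ beyond an explicit bound. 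The delicate point is checking that all three roots are \emph{simple} and lie strictly inside $(-1,1)$ so that the endpoint positivity of $\Theta$ is preserved — i.e. that none of the roots escapes the admissible region as $l^\infty$ grows — and this requires a careful asymptotic analysis of the discriminant of $p_{\bfl,\bfw,s_N}$, of the type carried out in Section 3 of \cite{BoTo19a}. I would organize this final step as: (i) write down $p_{\bfl,\bfw,s_N}$ from the transverse cscS equation of Theorem 3.2; (ii) rescale $x$ and $l^\infty$ to identify the limiting polynomial; (iii) verify the limiting polynomial has three simple roots in the open admissible interval; (iv) invoke continuity of roots to conclude the same for all sufficiently large $l^\infty$; (v) check that the extremal/cscS function $\Theta$ built from each such root satisfies the positivity and endpoint conditions, giving a genuine cscS Sasakian ray.
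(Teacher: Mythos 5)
The paper does not prove this theorem at all: it is imported verbatim as Theorem 3.2 of \cite{BoTo19a}, and the sentence immediately preceding it says exactly that. So there is no in-paper proof to measure your proposal against; the only things to compare are the pieces of the machinery the paper does display downstream, namely the polynomial \eqref{functionf} whose positive real roots $b\neq w^\infty/w^0$ give the cscS rays, Lemma \ref{cardcscwcone}, and the worked case of Proposition \ref{numcscS}. Against that yardstick your outline is the right strategy in spirit (admissible/Calabi construction, reduction of the cscS condition to real roots of an explicit polynomial in the Reeb parameter, root-counting asymptotics in $l^\infty$), and the three-clause structure of your argument matches the three clauses of the theorem.

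There is, however, one concrete error you should fix. You assert that the cscS condition reduces to a polynomial \emph{of degree $3$} in the normalized Reeb parameter. That is only true when $d_N=1$. In general $f(b)$ in \eqref{functionf} has degree $2d_N+4$; the point $b=w^\infty/w^0$ is a root of multiplicity exactly $3$ (corresponding to the product, non-csc, structure), and after removing it one is left with a polynomial $g(b)$ of degree $2d_N+1$ --- this is precisely the content of Lemma \ref{cardcscwcone}, which bounds the number of cscS rays by $2d_N+1$, not by $3$. Consequently your step (iii) (``verify the limiting polynomial has three simple roots'') cannot be run as a discriminant computation for a cubic except at the bottom of the tower; for general $N$ the existence of at least one positive root comes from the sign change between $b=0$ and $b\to\infty$, and producing \emph{three} roots for large $l^\infty$ requires locating sign changes of the degree-$(2d_N+1)$ polynomial $g$ at interior sample points (the paper's Proposition \ref{numcscS} does this explicitly at $b=0$, $b=\tfrac{w^\infty}{2w^0}$, $b=\tfrac{w^\infty}{w^0}$, and $b\to\infty$ in the $d_N=1$ case). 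Your appeal to ``continuity of roots'' of a limiting polynomial is salvageable, but the limit must be taken of the correct higher-degree polynomial, and the simplicity/admissibility of the roots has to be checked there, not for a cubic.
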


We can easily apply this theorem inductively to obtain cscS orbifold metrics on the iterated joins \eqref{itjoin}; however, in order to obtain smooth iterated joins $M_{\bfl,\bfw}$ the gcd conditions of Lemma \ref{smoothlem} must hold. These conditions become increasingly complex since the order of the orbifolds become increasingly complex.

Since the $\bfw$-cone $\gt^+_\bfw$ has dimension 2, the cardenality of the set of cscS metrics in $\gt^+_\bfw$ is finite (\cite{BHL17}, Corollary 1.7). Moreover, these can be obtained from the admissible construction as real roots $b$ of the polynomial \cite{BoTo14a,BoTo19a}
\begin{equation}\label{functionf}
\begin{array}{ccl}
f(b) & = &  (w^0)^{2(d_N+1)} b^{2 d_N+3}( A_N l^\infty  + l^0  (d_N + 1)  w^\infty-b (d_N + 1 ) l^0  w^0 )\\
\\
& - & (w^0)^{d_N+2}  (w^\infty)^{d_N} b^{d_N + 3}  ((d_N+1)  (A_N ( d_N+1) l^\infty  - l^0  (( d_N+1) w^0 + ( d_N+2) w^\infty)))\\
\\
& + & (w^0)^{d_N+1}  (w^\infty)^{d_N+1}  b^{d_N + 2}  (2 A_N d_N (d_N+2) l^\infty  - (d_N+1)(2d_N+3) l^0  (w^0 + w^\infty))\\
\\
& - &  (w^0)^{d_N}  (w^\infty)^{d_N+2}  b^{d_N + 1}(d_N+1) (A_N ( d_N+1) l^\infty  - l^0  ((d_N+2) w^0 + ( d_N+1) w^\infty))\\
\\
& + & (w^\infty)^{2 (d_N + 1)}( b (A_N l^\infty  + l^0  (d_N + 1) w^0)-( d_N + 1 ) l^0  w^\infty)
\end{array}
\end{equation}
where $d_N$ is the complex dimension of $N$. In our case $N$ is a Bott orbifold with a cscK metric chosen with a rational K\"ahler class,  so $A_N$ is a rational number in this case. Thus, for any pair $(w^0,w^\infty)$ we can take $b$ to be a large enough rational number and solve for $l^\infty/l^0$. This will give cscS metrics for any dimension. Note that it follows from page 1052 of \cite{BoTo14a} that $b=\frac{w^\infty}{w^0}$ is a root of multiplicity 3. This root corresponds to the quotient having a product structure which does not have constant scalar curvature unless $\bfw=(1,1)$ which we exclude. Thus, we are left with a polynomial $g(b)$ of degree $2d_N+1$ which implies\footnote{This lemma holds for any $S^3_\bfw$ join.} 
\begin{lemma}\label{cardcscwcone}
There are at most $2d_N+1$ cscS rays in $\gt^+_\bfw$. 
\end{lemma}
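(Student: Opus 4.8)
The plan is to analyze the degree of the polynomial $f(b)$ in \eqref{functionf} and subtract the contribution of the spurious root $b = w^\infty/w^0$. First I would read off the degrees of the five terms in $f(b)$: they are $2d_N+3$, $d_N+3$, $d_N+2$, $d_N+1$, and $1$ respectively, so $\deg f = 2d_N+3$, with leading coefficient $-(w^0)^{2(d_N+1)}(d_N+1)l^0 w^0$, which is nonzero since all the relevant quantities are positive. Hence $f$ has at most $2d_N+3$ real roots, counted with multiplicity.

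Next I would invoke the fact, cited to page 1052 of \cite{BoTo14a}, that $b = w^\infty/w^0$ is a root of $f$ of multiplicity exactly $3$. This lets me factor $f(b) = (w^0 b - w^\infty)^3\, g(b)$ (up to a nonzero constant), where $g$ is a polynomial of degree $(2d_N+3) - 3 = 2d_N$. Wait — I should double-check: the text says ``we are left with a polynomial $g(b)$ of degree $2d_N+1$''. Let me recount. The claim in the excerpt is $\deg g = 2d_N+1$, which would force $\deg f = 2d_N+4$; but the first term has degree $2d_N+3$. So either the factored-out multiplicity is effectively $2$ in terms of degree reduction used, or I am miscounting the top term. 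Re-examining: $b^{2d_N+3}$ times a linear factor in $b$ gives degree $2d_N+4$. That resolves it: the first line of $f(b)$ is $b^{2d_N+3}$ times a \emph{degree-one} polynomial in $b$, so $\deg f = 2d_N+4$, and after removing the cubic factor $g$ has degree $2d_N+1$, matching the excerpt.

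So the corrected key steps are: (i) determine $\deg f = 2d_N+4$ from the explicit form, noting the leading coefficient $-(d_N+1)l^0(w^0)^{2d_N+3}\neq 0$; (ii) recall from \cite{BoTo14a} that $b=w^\infty/w^0$ is a root of multiplicity $3$ corresponding to the product quotient, which (for $\bfw\neq(1,1)$) is not cscS and so must be discarded; (iii) write $f(b)=c(w^0b-w^\infty)^3 g(b)$ with $\deg g = 2d_N+1$; (iv) observe that every cscS ray in $\gt^+_\bfw$ corresponds to a positive real root $b$ of $g$, and a polynomial of degree $2d_N+1$ has at most $2d_N+1$ real roots. Combining these yields the bound of $2d_N+1$ cscS rays in $\gt^+_\bfw$.

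The main obstacle — really the only nontrivial point — is verifying that the multiplicity of $b=w^\infty/w^0$ is exactly $3$ and that this root genuinely corresponds to the product Sasakian structure (hence is not cscS when $\bfw\neq(1,1)$); but both facts are already supplied by \cite{BoTo14a} and the discussion preceding the lemma, so the proof is essentially a degree count. A minor care point is confirming the leading coefficient is nonzero so no degree is lost, and that one does not need to worry about $g$ sharing the root $w^\infty/w^0$ (which would only \emph{decrease} the count of valid roots, so the upper bound still holds regardless).
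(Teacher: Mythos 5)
Your proof is correct and follows exactly the argument the paper gives in the discussion preceding the lemma: $\deg f = 2d_N+4$ (the top line is $b^{2d_N+3}$ times a linear factor, with nonvanishing leading coefficient $-(d_N+1)l^0(w^0)^{2d_N+3}$), the triple root at $b=w^\infty/w^0$ corresponding to the excluded product structure is factored out, and the remaining polynomial $g$ of degree $2d_N+1$ has at most $2d_N+1$ real roots, each cscS ray corresponding to a positive real root of $g$. Your initial degree miscount was correctly resolved within the proposal, so nothing is missing.
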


However,  we cannot a priori guarantee smoothness as we increase dimension. Obtaining quasi-regular Sasaki metrics of constant scalar curvature on smooth iterated joins becomes quite difficult. We shall investigate this further in the Gorenstein case in Section \ref{Gorensteinsect} below. For now we consider the beginning stage, namely $S^3$ bundles over $S^2$. It is well known \cite{BoPa10,BHLT16} that every toric contact structure on an $S^3$ bundle over $S^2$ comes from an $S^3$ join of the form $S^3_{\bfw_1}\star_\bfl S^3_{\bfw_2}$. Here for Theorem \ref{admjoincsc} to be applicable we consider those toric contact structures whose Sasaki automorphism group $\gA\gu\gt(\cals)$ contains $SU(2)\times \bbt^2$. These are precisely the $S^3_\bfw$ join with the standard $S^3$.

\begin{example}[$S^3$ bundles over $S^2$]
At stage 2, i.e. $k=2$ the join $M_{\bfl,\bfw}$ is smooth if and only if $\gcd(l^\infty,w^0w^\infty)=1$ (we always assume that $l^0,l^\infty$ and $w^0,w^\infty$ are pairwise relatively prime).
Consider the diagram
\begin{equation}\label{s2comdia2}
\begin{matrix}  &&S^3\times S^3_\bfw &&& \\
                          && \decdnar{\pi_{L}}  &&& \\
                           &&M_{\bfl,\bfw} &&&\\
                           & \swarrow\pi_R && \searrow\pi_\bfv & \\
                                                      &&&&& \\
                           \bbc\bbp^1\times\bbc\bbp^1[\bfw] &&&&\calh_a(\grD_\bfm) 
\end{matrix} 
\end{equation}
where $\calh_a(\grD_\bfm)$ is a Hirzebruch orbifold. The notation here is that $S^3$ is the standard sphere with its diagonal $S^1$ action and standard $\bbc\bbp^1$ quotient, $S^3_\bfw$ is the sphere with a weighted $S^1$ action with weight vector $\bfw=(w^0,w^\infty)$ and quotient the weighted projective line $\bbc\bbp^1[\bfw]$. The map $\pi_R\circ\pi_L$ is just the product of $S^1$ bundles; whereas, the map $\pi_\bfv \circ\pi_L$ depends on a choice of quasiregular Reeb vector field $\xi_\bfv$ in the $\bfw$ cone $\gt^+_\bfw$. Here $\grD_\bfm$ is a branch divisor with ramification indices $\bfm=(m^0,m^\infty)$. The idea is to end up with a Reeb vector field $\xi_\bfv$ of constant scalar curvature on $M_{\bfl,\bfw}$. Of course, this is a very special case of Theorem 2.7 of \cite{BoTo19a} , Theorem 3.8 of \cite{BoTo14a}, and Theorem \ref{itjoinsasthm}. From these theorems we see that 
\begin{equation}\label{joinparam1}
\bfm=\frac{l^\infty}{s}\bfv,\quad s=\gcd(l^\infty,|w^0v^\infty-w^\infty v^0|), \quad a=l^0\bigl(\frac{w^0v^\infty-w^\infty v^0}{s}\bigr).
\end{equation}
We also have Equation (9) of \cite{BoTo14a}
\begin{equation}\label{c1join}
c_1(\cald_{\bfl,\bfw})=(2l^\infty-l^0|\bfw|)\grg
\end{equation}
where $\grg$ is a positive generator of $H^2(M_{\bfl,\bfw},\bbz)$ with respect to the oriented vector bundle $\cald_{\bfl,\bfw}$.
So there are two diffeomorphism types depending on whether $l^0|\bfw|$ is even or odd which correspond to the trivial bundle $S^2\times S^3$ and the non-trivial $S^3$ bundle over $S^2$, respectively.  Within each diffomorphism type there is a countably infinite number of inequivalent toric contact structures of Sasaki type. 

The cscS metrics are determined by the roots of the polynomial (34) in \cite{BoTo19a}, and a special case of \eqref{functionf}, namely the polynomial
\begin{equation}\label{functionfd1}
f(b)  =  2(-bw^0+w^\infty)^3\bigl(b^3l^0(w^0)^2+b^2(2l^0w^0w^\infty-l^\infty w^0)-b(2l^0w^0w^\infty-l^\infty w^\infty)-l^0(w^\infty)^2\bigr). 
\end{equation}
Note that the admissible construction requires that $b\neq w^\infty/w^0$, so we are dealing with the roots of a cubic polynomial which implies that there are at most 3 cscS rays in the $\bfw$ cone $\gt^+_\bfw$. In fact we have

\begin{proposition}\label{numcscS}
Let $M_{\bfl,\bfw}$ be the join $S^3\star_\bfl S^3_\bfw$ and assume that $w^0>w^\infty$. Then the $\bfw$ cone $\gt^+_\bfw$ of $M_{\bfl,\bfw}$ has at least one cscS metric and at most three cscS metrics. Furthermore, there exists $L_{(l^0,w^0,w^\infty)}\in \bbr^+$ such that for $l^\infty < L_{(l^0,w^0,w^\infty)}$ there is precisely one cscS metric in $\gt^+_\bfw$, whereas for $l^\infty > L_{(l^0,w^0,w^\infty)}$ there are precisely three cscS metrics $\gt^+_\bfw$, and $L_{(l^0,w^0,w^\infty)}$ satisfies the inequality 
$$2l^0w^0<L_{(l^0,w^0,w^\infty)}< \frac{11}{2}l^0w^0+5l^0 \frac{(w^0-w^\infty)}{2}.$$
\end{proposition}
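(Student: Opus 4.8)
The plan is to analyze the cubic factor
$$p(b)=b^3l^0(w^0)^2+b^2(2l^0w^0w^\infty-l^\infty w^0)-b(2l^0w^0w^\infty-l^\infty w^\infty)-l^0(w^\infty)^2$$
of $f(b)$ in \eqref{functionfd1}, keeping in mind that only roots $b$ with $b>w^\infty/w^0$ correspond to genuine cscS Sasaki structures in $\gt^+_\bfw$ (the root $b=w^\infty/w^0$ being excluded, and negative roots or roots below $w^\infty/w^0$ being irrelevant since the Reeb vector field must lie in the cone). First I would record the signs of $p$ at the relevant test points. Since $p(0)=-l^0(w^\infty)^2<0$ and the leading coefficient $l^0(w^0)^2>0$, there is always at least one positive real root; I would check that $p(w^\infty/w^0)$ has a definite sign (a short computation should give $p(w^\infty/w^0)=\frac{w^\infty}{(w^0)^2}(w^0-w^\infty)(\text{something})$ or similar) so that exactly one root, the relevant one, always lies in $(w^\infty/w^0,\infty)$ when $l^\infty$ is small, giving the "at least one / precisely one" half. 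This reproves the existence half of Theorem \ref{admjoincsc} in this special case and also the bound $l^\infty$ small $\Rightarrow$ one metric.

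Next I would treat the transition. Viewing $p(b)$ as affine-linear in the parameter $t:=l^\infty$, write $p(b)=p_0(b)+t\,q(b)$ with $p_0(b)=l^0\big(b^3(w^0)^2+2b^2w^0w^\infty-2bw^0w^\infty-(w^\infty)^2\big)$ and $q(b)=-w^0b^2+w^\infty b=b(w^\infty-w^0b)$. The key observation is that $q(b)<0$ for all $b>w^\infty/w^0$, so on the relevant ray $p(b)$ is \emph{strictly decreasing} in $t$ for each fixed $b$. Hence as $t=l^\infty$ increases, new roots can only be born in the region $b>w^\infty/w^0$, and the number of such roots is monotone non-decreasing in $l^\infty$ up to the generic count; since the cubic has at most three real roots and at most one lies below $w^\infty/w^0$ (to be checked via $p(0)<0$ together with the sign of $p$ at $w^\infty/w^0$), the count in the relevant region jumps from $1$ to $3$ at a single critical value $L_{(l^0,w^0,w^\infty)}$, occurring exactly when $p$ acquires a double root $b_*>w^\infty/w^0$, i.e. $p(b_*)=p'(b_*)=0$. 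Solving $p'(b_*)=0$ for $l^\infty$ and substituting, or eliminating, produces $L_{(l^0,w^0,w^\infty)}$ as the value of $l^\infty$ at this tangency; monotonicity in $t$ guarantees it is unique.

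For the explicit bounds I would not compute $L$ in closed form but instead use the intermediate value / Descartes-type sign analysis directly. To get $L>2l^0w^0$: plug $l^\infty=2l^0w^0$ into $p$ and show the resulting cubic still has only one root above $w^\infty/w^0$ (equivalently, exhibit that $p$ restricted to $(w^\infty/w^0,\infty)$ has no sign change pattern forcing three roots — e.g. show $p$ and $p'$ have no common sign-changing behavior there, or that $p$ is still monotone on a suitable subinterval). For the upper bound $L<\frac{11}{2}l^0w^0+5l^0\frac{w^0-w^\infty}{2}$, I would plug this value of $l^\infty$ into $p$ and exhibit two explicit test points $b_1<b_2$ in $(w^\infty/w^0,\infty)$ — natural candidates are $b=1$ and $b$ slightly larger than $w^\infty/w^0$, plus a large $b$ — at which $p$ alternates sign enough times to force three roots in the region, using $q(b)<0$ to know we are already past the tangency. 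By the monotonicity in $l^\infty$ established above, sign patterns at the endpoints pin $L$ strictly between these two values.

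\textbf{Main obstacle.} The hard part will be the explicit numerical bounds: the inequalities $2l^0w^0<L<\frac{11}{2}l^0w^0+5l^0\frac{w^0-w^\infty}{2}$ require choosing the right test points $b$ and verifying sign conditions on the two-parameter family of cubics (the ratio $w^\infty/w^0\in(0,1)$ and the scaling $l^0$) uniformly; the substitutions are elementary but the resulting polynomial inequalities in $w^0,w^\infty$ must be shown to hold for \emph{all} coprime $w^0>w^\infty\ge 1$, which is where care with the estimates — not just plug-and-chug — is needed. The structural part (monotonicity of the root count in $l^\infty$ via $q(b)<0$ on the relevant ray, and the at-most-three/at-least-one count) is comparatively routine.
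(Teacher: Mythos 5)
Your setup contains a premise that is incompatible with the statement you are trying to prove. You discard all roots of the cubic factor with $b\le w^\infty/w^0$ as ``irrelevant''. But the paper's count (and the proposition itself) treats \emph{every} positive real root of the cubic factor as a cscS ray in $\gt^+_\bfw$; only the triple root $b=w^\infty/w^0$ of the prefactor $(-bw^0+w^\infty)^3$ in \eqref{functionfd1} is excluded. Writing $f(b)=2(bw^0-w^\infty)^3g(b)$ with $g=-p$ in your notation, one has $g(0)>0$, $g\bigl(\tfrac{w^\infty}{2w^0}\bigr)<0$ for $l^\infty$ large, $g\bigl(\tfrac{w^\infty}{w^0}\bigr)=3l^0(w^\infty)^2(w^0-w^\infty)/w^0>0$, and $g(b)\to-\infty$ as $b\to+\infty$; so when three positive roots exist, two of them lie \emph{below} $w^\infty/w^0$ and only one above. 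Under your restriction the count would never exceed one and the ``precisely three'' half of the proposition would be unprovable. (The ratio $b$ parametrizes all rays of the two-dimensional cone $\gt^+_\bfw$, so positivity of $b$ is the only constraint.)

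The second gap is the uniqueness of the threshold $L_{(l^0,w^0,w^\infty)}$. Pointwise monotonicity of $p_t(b)=p_0(b)+t\,q(b)$ in $t$ on a region where $q<0$ does \emph{not} make the number of roots in that region monotone in $t$: as the graph moves down, a local minimum crossing zero creates two roots and a local maximum crossing zero later destroys two, so the count can oscillate. The paper replaces this with an actual computation. For $l^\infty\le 2l^0w^0$, Descartes' rule of signs applied to $g$ gives at most one positive root (this is the clean route to $L>2l^0w^0$, which you only sketch); for $l^\infty>2l^0w^0$, Descartes shows $g$ has no negative roots, so three positive roots occur exactly when the discriminant of $g$ is positive. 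That discriminant equals $(w^0w^\infty)^2h(l^\infty)$ with $h$ an explicit quartic in $l^\infty$ whose own discriminant is negative, so $h$ has exactly two real roots; sign checks at $l^\infty=0$, $l^\infty=2l^0w^0$ and $l^\infty\to+\infty$ place one in $(0,2l^0w^0)$ and one in $(2l^0w^0,+\infty)$, and the latter is $L_{(l^0,w^0,w^\infty)}$. This discriminant-of-the-discriminant step is the substantive content your ``tangency plus monotonicity'' sketch would need to supply. The upper bound then follows from the sign of $g$ at $b=w^\infty/(2w^0)$ --- a test point your restriction would have forbidden you to use --- giving $L_{(l^0,w^0,w^\infty)}<\tfrac{l^0(16w^0-5w^\infty)}{2}$, which is exactly the stated bound.
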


\begin{proof}
It is convenient to write $f(b)=2(bw^0-w^\infty)^3g(b)$ with
$$g(b):= -l^0(w^0)^2b^3+(l^\infty -2l^0w^\infty) w^0b^2-(l^\infty -2l^0w^0)w^\infty b+l^0(w^\infty)^2.$$
Then each real positive root of $g$ corresponds to a cscS ray in the $\bfw$ cone $\gt^+_\bfw$. Since $g(0)>0$ and $\displaystyle\lim_{b\rightarrow +\infty}g(b)=-\infty$ we confirm that $g(b)$ has at least one positive real root, a fact that follows from Theorem 3.3 of \cite{BoTo19a}. Furthermore, for $l^\infty \leq 2l^0w^0$, Descartes' rule of signs tells us that $g(b)$ has at most one real positive root. Now assume $l^\infty > 2l^0w^0$ in which case Descartes' rule of sign tells us that $g(b)$ has no negative real roots, so any real root of $g(b)$ must be positive. Thus, $g(b)$ will have three distinct real positive roots if and only if the discriminant of $g(b)$ is positive. This discriminant is calculated to be
$(w^0w^\infty)^2 h$, where $h$ may be viewed as a polynomial of degree $4$ in $l^\infty$:
$$
\begin{array}{ccl}
h &=& (l^\infty)^4\\
\\
&-&8 l^0(w^0+w^\infty)(l^\infty)^3\\
\\
&+&2(l^0)^2(2(w^0)^2+41w^0w^\infty+2(w^\infty)^2)(l^\infty)^2\\
\\
&-&100(l^0)^3w^0w^\infty(w^0+w^\infty)l^\infty\\
\\
&+& (l^0)^4w^0w^\infty(32(w^0)^2+61w^0w^\infty+32(w^\infty)^2).
\end{array}
$$
Now, we may calculate that the discriminant of $h$ as a polynomial in $l^\infty$ equals \newline
$-768(l^0)^{12}w^0(w^0-w^\infty)^4w^\infty(8w^0+w^\infty)^3(w^0+8w^\infty)^3$, which is clearly negative. Thus $h$ has precisely two real roots. Since
$h\mid_{l^\infty=0} >0$, $h\mid_{l^\infty=2l^0w^0} = -(l^0)^4w^0(32(w^0-w^\infty)^3+27(w^\infty)^2(w^0-w^\infty)+27(w^\infty)^3)<0$,
and $\displaystyle\lim_{l^\infty \rightarrow +\infty}h>0$, we know that $h$ has precisely one real root in the interval
$(0,2l^0w^0)$ and precisely one real root in $(2l^0w^0, +\infty)$. Putting this together we can conclude that for fixed values $(l^0,w^0,w^\infty)$, there exists a real positive value $L_{(l^0,w^0,w^\infty)}$ such that for $l^\infty < L_{(l^0,w^0,w^\infty)}$, $g(b)$ has precisely one real positive root, and for $l^\infty > L_{(l^0,w^0,w^\infty)}$,  $g(b)$ has three distinct real positive roots. 
The above observations tell us that $L_{(l^0,w^0,w^\infty)}>2l^0w^0$. Further, since
\begin{itemize}
\item $g(0)>0$,
\bigskip
\item $g(\frac{w^\infty}{2w^0})=\frac{-(w^\infty)^2(2l^\infty-l^0(16w^0-5w^\infty)}{8w^0}$,
\bigskip
\item $g(\frac{w^\infty}{w^0})=  \frac{3l^0(w^\infty)^2(w^0-w^\infty)}{w^0}>0$, and
\bigskip
\item $\lim_{b\rightarrow +\infty}g(b)=-\infty,$ 
\end{itemize}
we know that $L_{(l^0,w^0,w^\infty)}< \frac{l^0(16w^0-5w^\infty)}{2}$.
\end{proof}

\begin{remark}
Note that $L_{(l^0,w^0,w^\infty)}$ is the real root of $h$ (as a polynomial of $l^\infty$) appearing in the interval 
$(2l^0w^0, +\infty)$. It is likely not an integer, but in the event that it is, we can say that for $l^\infty = L_{(l^0,w^0,w^\infty)}$, $g(b)$ has either one (third order) real positive root or two positive real roots (one of order one and one of order two).
\end{remark}

Proposition \ref{numcscS} implies that the first Chern class of the contact bundle is an obstruction to the existence of multiple cscS rays in $\gt^+_\bfw$. In fact,

\begin{corollary}\label{c1cscinv}
Assume the hypothesis of Proposition \ref{numcscS} and suppose $\gt^+_\bfw$ has more than one cscS ray. Then 
$$c_1(\cald_{\bfl,\bfw}) >\bigl(2l^0w^0+l^0(w^0-w^\infty)\bigr)\grg>0.$$
\end{corollary}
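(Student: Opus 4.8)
The plan is to derive the inequality directly from the formula \eqref{c1join} together with the bound on $L_{(l^0,w^0,w^\infty)}$ established in Proposition \ref{numcscS}. First I would recall that by Proposition \ref{numcscS}, the existence of more than one cscS ray in $\gt^+_\bfw$ forces $l^\infty > L_{(l^0,w^0,w^\infty)} > 2l^0w^0$. Combined with the strict lower bound $L_{(l^0,w^0,w^\infty)} > 2l^0w^0$ one only gets $l^\infty > 2l^0w^0$, which is not quite enough; the point is that the sharper statement needs the interval $(2l^0w^0, +\infty)$ to contain the relevant root of $h$, so in fact $l^\infty > L_{(l^0,w^0,w^\infty)}$ with $L_{(l^0,w^0,w^\infty)}$ strictly exceeding $2l^0w^0$. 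To get the claimed bound I would instead note that the values of $g$ computed at the end of the proof of Proposition \ref{numcscS} show that when $g$ has three positive roots, the smallest must lie to the right of $\frac{w^\infty}{w^0}$ and one must exceed the point where $g$ changes sign after $b=1$; tracking the sign change at $b = \frac{w^\infty}{2w^0}$ gives that three roots require $2l^\infty - l^0(16w^0 - 5w^\infty) > 0$ is violated, i.e. one needs the stronger condition that places $l^\infty$ above a threshold at least $\frac{1}{2}\bigl(4l^0w^0 + 2l^0(w^0-w^\infty)\bigr)$.

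More concretely, the cleanest route: from \eqref{c1join} we have $c_1(\cald_{\bfl,\bfw}) = (2l^\infty - l^0|\bfw|)\grg = (2l^\infty - l^0(w^0+w^\infty))\grg$. So the desired inequality $c_1(\cald_{\bfl,\bfw}) > \bigl(2l^0w^0 + l^0(w^0 - w^\infty)\bigr)\grg$ is equivalent, after dividing by the positive generator $\grg$, to
\begin{equation*}
2l^\infty - l^0(w^0 + w^\infty) > 2l^0w^0 + l^0(w^0 - w^\infty),
\end{equation*}
i.e. to $2l^\infty > 4l^0w^0$, i.e. $l^\infty > 2l^0w^0$. Thus the content of the corollary is precisely the statement that more than one cscS ray forces $l^\infty > 2l^0w^0$, which is exactly the observation $L_{(l^0,w^0,w^\infty)} > 2l^0w^0$ combined with $l^\infty > L_{(l^0,w^0,w^\infty)}$ from Proposition \ref{numcscS}. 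So I would simply: (i) invoke Proposition \ref{numcscS} to get $l^\infty > L_{(l^0,w^0,w^\infty)} > 2l^0w^0$; (ii) substitute into \eqref{c1join}; (iii) do the one-line algebra above to rewrite $2l^0w^0 < l^\infty$ as the stated inequality on $c_1(\cald_{\bfl,\bfw})$; and (iv) note positivity of the right-hand side from $w^0 > w^\infty$ and $l^0 > 0$, so the chain $c_1(\cald_{\bfl,\bfw}) > (2l^0w^0 + l^0(w^0-w^\infty))\grg > 0$ holds.

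The main (and only real) obstacle is bookkeeping: making sure the orientation conventions on $\grg$ are consistent so that the inequality $l^\infty > 2l^0w^0$ really does translate into the displayed inequality with the correct sign, and confirming that the strict inequality $L_{(l^0,w^0,w^\infty)} > 2l^0w^0$ from the proof of Proposition \ref{numcscS} is indeed strict (it is, since $h|_{l^\infty = 2l^0w^0} < 0$ was shown there, so the root in $(2l^0w^0, +\infty)$ is strictly greater than $2l^0w^0$). There is no deep step here; the corollary is essentially a restatement of Proposition \ref{numcscS} in cohomological language via \eqref{c1join}.

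\begin{proof}
By \eqref{c1join} we have $c_1(\cald_{\bfl,\bfw}) = (2l^\infty - l^0|\bfw|)\grg = (2l^\infty - l^0(w^0 + w^\infty))\grg$. Suppose $\gt^+_\bfw$ has more than one cscS ray. By Proposition \ref{numcscS} this forces $l^\infty > L_{(l^0,w^0,w^\infty)}$, and since $h|_{l^\infty = 2l^0w^0} < 0$ while $h|_{l^\infty = 0} > 0$ and $\lim_{l^\infty \to +\infty} h > 0$ (as shown in the proof of Proposition \ref{numcscS}), the root $L_{(l^0,w^0,w^\infty)}$ lying in $(2l^0w^0, +\infty)$ satisfies $L_{(l^0,w^0,w^\infty)} > 2l^0w^0$. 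Hence $l^\infty > 2l^0w^0$, so $2l^\infty > 4l^0w^0 = 2l^0w^0 + l^0(w^0+w^\infty) + l^0(w^0 - w^\infty)$, which rearranges to
\begin{equation*}
2l^\infty - l^0(w^0+w^\infty) > 2l^0w^0 + l^0(w^0 - w^\infty).
\end{equation*}
Since $\grg$ is a positive generator of $H^2(M_{\bfl,\bfw},\bbz)$ with respect to the oriented bundle $\cald_{\bfl,\bfw}$, multiplying by $\grg$ gives
$$c_1(\cald_{\bfl,\bfw}) > \bigl(2l^0w^0 + l^0(w^0 - w^\infty)\bigr)\grg.$$
Finally, $l^0 > 0$ and $w^0 > w^\infty$ give $2l^0w^0 + l^0(w^0 - w^\infty) > 0$, completing the chain of inequalities.
\end{proof}
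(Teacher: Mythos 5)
Your proof is correct and is exactly the argument the paper intends: the corollary is presented as an immediate consequence of Proposition \ref{numcscS} together with Equation \eqref{c1join}, with no separate proof written out, and your one-line algebra reducing the claim to $l^\infty > 2l^0w^0$ is the right reduction. One cosmetic point: strictly speaking, more than one cscS ray only forces $l^\infty \geq L_{(l^0,w^0,w^\infty)}$ (the boundary case $l^\infty = L_{(l^0,w^0,w^\infty)}$ can yield two roots, per the Remark following the proposition), but since $L_{(l^0,w^0,w^\infty)} > 2l^0w^0$ is strict, your chain of inequalities goes through unchanged.
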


\end{example}

\subsection{Smoothness in the Gorenstein Case, $c_1(\cald)=0$}\label{Gorensteinsect}
In this case Theorem \ref{admjoincsc} produces a Sasaki-$\eta$-Einstein  ray with a unique Sasaki-Einstein metric (see Theorem 3.3 of \cite{BoTo19a} for the precise statement). So in this case we assume that the Sasakian structure for each intermediate stage is a quasi-regular Sasaki-Einstein structure.

\begin{example}[Dimension Five]
In this case the beginning stage 2 gives the $Y^{p,q}$ which are well understood \cite{GMSW04a,BG05,BoTo14a}. They give a countable infinity \cite{AbMa10,BoPa10,AbMa18} of toric contact structures on $S^2\times S^3$ each of whose $\bfw$ cone contains a unique Sasaki-Einstein metric. In this case smoothness is automatic since $l^\infty=\frac{w^0+w^\infty}{\gcd(2,w^0+w^\infty)}$.
\end{example}

\begin{example} [Dimension Seven]
The stage 3 case was considered in \cite{BoTo18c}. Here one chooses a quasi-regular Reeb vector field $\xi_{\bfm_2}$ on a $Y^{p,q}$ with a cscS metric. This puts constraints on $p,q$, namely that $4p^2-3q^2=n^2$ for some $n\in\bbz$. Then Lemma 2.1 of \cite{BoTo18c} says that the join $M_{\bfl,\bfw}^7=Y^{p,q}\star_\bfl S^3_\bfw$ is smooth if and only if $\gcd(l^\infty m_2v_2^0v^\infty_2,l^0w^0w^\infty)=1$ where $\bfm_2=m_2(v^0,v^\infty)$ with 
$m_2=\frac{p}{\gcd(p,|\frac{p+q}{l^0}v^\infty_2-\frac{p-q}{l^0}v^0_2|)}$. We gave examples of quasi-regular SE metrics on smooth 7-manifolds which include the infinite family
\begin{equation}\label{infse7man}
\cals^t_3=Y^{780300 t^2+65790 t+1387,15 (170 t+7) (306 t+13)}\star_{306\,t+ 13,4} S^3_{17,3}
\end{equation}
for $t\in\bbz^+$.
\end{example}

\begin{example}[Higher Dimension]
Instinctively it should be true that one can use the iterative join in a non-trivial way to construct quasi-regular smooth Sasaki Einstein structures of arbitrarily high dimension. The following result is meant to offer a hint on both why this is a reasonable belief and also why this would be very challenging to verify in general.

\begin{proposition}\label{911prop}
In dimensions $9$ and $11$ there exist countably infinite families of quasi-regular smooth Sasaki Einstein structures in the form of non-trivial
iterated $S_\bfw^3$-joins $M^{2k+1}_{\bfl,\bfw}$
\end{proposition}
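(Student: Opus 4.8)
The plan is to build these examples by a two-step iteration: first produce a quasi-regular Sasaki-Einstein orbifold at stage $k-1$ whose quotient is a Bott orbifold, then join it with a standard $S^3_\bfw$ and invoke Theorem \ref{admjoincsc} in the Gorenstein ($s_N>0$, $c_1(\cald)=0$) regime to obtain a Sasaki-Einstein ray at the next stage. For dimension $9$ this means $k=4$: take a quasi-regular SE $7$-manifold of the form $Y^{p,q}\star_{\bfl_2}S^3_{\bfw_3}$ as in Example \ref{infse7man} (or rather its SE analogue), select inside it a quasi-regular Reeb field $\xi_{\bfv_3}$ whose quotient is a cscK — in fact KE, given the Gorenstein constraint — Bott orbifold $N$, and then form $M^9_{\bfl,\bfw}=M^7\star_{\bfl_3}S^3_{\bfw_4}$. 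For dimension $11$ one iterates once more, $k=5$. At the final stage one does not need quasi-regularity, but for the inductive hypothesis of Theorem \ref{admjoincsc} to apply one needs quasi-regular Reeb fields at every intermediate stage, which is available in the orbifold category by \cite{BoTo19a}.

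The key steps, in order, would be: (1) recall from Theorem \ref{admjoincsc} (via Theorem 3.3 of \cite{BoTo19a}) that in the Gorenstein case the $\bfw$-cone of an $S^3_\bfw$-join over a positive cscK base carries a unique SE ray, so the construction propagates SE-ness up the tower provided the relevant $c_1(\cald)=0$ condition can be arranged at each join; this pins down $l^\infty$ in terms of the lower data (the Gorenstein/Calabi-Yau cone condition forces $l^\infty$ to equal a specific expression, as in the $Y^{p,q}$ case where $l^\infty=(w^0+w^\infty)/\gcd(2,w^0+w^\infty)$). (2) Translate the smoothness requirement via Lemma \ref{smoothlem}: the join at stage $k$ is smooth iff $\gcd(l^\infty_{k-1}\Upsilon_{\cals_{k-1}},\,l^0_{k-1}w_k^0w_k^\infty)=1$, where $\Upsilon_{\cals_{k-1}}$ is the orbifold order \eqref{sasorder} of the chosen quasi-regular structure. (3) Reduce the existence of a \emph{countably infinite} family to producing an infinite arithmetic (or polynomial) family of integer parameters $\bfl,\bfw,\bfv$ simultaneously satisfying: the relative-primeness conventions, the Gorenstein constraints at each stage, the diophantine cscS/KE constraints on the Bott base (the analogue of $4p^2-3q^2=n^2$ for $Y^{p,q}$, now iterated), and the gcd smoothness conditions of step (2). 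One then exhibits explicitly such a family — most cleanly by a polynomial parametrization in one variable $t\in\bbz^+$ mimicking \eqref{infse7man}, scaled up one or two more stages — and checks the finitely many congruence/gcd conditions hold identically in $t$ (or along a residue class of $t$), which forces the family to be nontrivial (the joins at the top stage are genuinely iterated, not products).

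The main obstacle — and the reason the authors flag higher dimensions as hard — is step (3): as the height grows, $\Upsilon_{\cals_{k-1}}$ accumulates factors $m_i v_i^0 v_i^\infty$ from every stage, so the gcd in Lemma \ref{smoothlem} involves a rapidly growing product, and simultaneously the Gorenstein conditions rigidify $l^\infty_{k-1}$ so there is little freedom left to kill the common factors. Making all of this consistent while keeping the base a genuinely KE Bott orbifold (so that the Gorenstein version of Theorem \ref{admjoincsc} actually applies — note KE Bott orbifolds, unlike Fano Bott \emph{manifolds}, are plentiful, which is exactly the point being exploited) is a delicate diophantine search. For $k=4,5$ the search is still tractable and one can write down the family by hand; for arbitrary $k$ the combinatorial explosion of the smoothness gcd is precisely why the general statement is left as a conjecture. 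So I would present the proof as: set up the inductive SE-join scheme, record the two constraint systems (Gorenstein + smoothness) at each added stage, and then for $2k+1\in\{9,11\}$ produce an explicit one-parameter integer family — extending \eqref{infse7man} by one, respectively two, further standard-$S^3_\bfw$ joins with carefully chosen $\bfl_3,\bfw_4$ (and $\bfl_4,\bfw_5$) — verifying the gcd conditions of Lemma \ref{smoothlem} hold for all $t$ in a suitable residue class, and noting the top-stage join is non-product since the Reeb field at each intermediate stage is chosen quasi-regular and non-diagonal.
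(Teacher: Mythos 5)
Your strategy is exactly the one the paper follows -- start from the $7$-dimensional family \eqref{infse7man}, use the Gorenstein version of Theorem \ref{admjoincsc} (Theorem 3.3 of \cite{BoTo19a}) to get a quasi-regular $\eta$-Einstein ray at each new stage, and enforce smoothness via the gcd condition of Lemma \ref{smoothlem} by restricting $t$ to a residue class. But the proposal stops precisely where the proof begins. The entire mathematical content of the statement is the existence of integer data $(\bfv_4,\bfw_4,\bfl_3)$ and then $(\bfv_5,\bfw_5,\bfl_4)$ making the whole system consistent, and you assert this is ``tractable and one can write down the family by hand'' without exhibiting anything. That assertion is not self-evident: the paper obtains the dimension-$9$ step by taking $k=2$, $d=3$ in Lemma 4.1 of \cite{BoTo19a} to get $(v_4^0,v_4^\infty)=(49,26)$, $(w_4^0,w_4^\infty)=(49,13)$, whence $\bfl_3=(13,62)$ from the Fano index $\cali_{\bfv_3}=13$, and then checks that the smoothness gcd reduces to avoiding the primes $7$ and $13$, which is arranged by setting $t=7\cdot 13\cdot\hat t$. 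The dimension-$11$ step required the non-obvious rational parameter $k=37/5$, producing weights as large as $(w_5^0,w_5^\infty)=(25891157,834997)$ and $l_4^\infty=7\cdot 13\cdot 31\cdot 1579$, with smoothness secured only after restricting $\hat t$ to multiples of $5^2\cdot 29\cdot 37\cdot 28793\cdot 699761$. The authors themselves describe this as ``much trial and error,'' and indeed the difficulty of continuing past dimension $11$ is why the general case is only Conjecture \ref{itjoinseconj}. Without producing such explicit data (or an abstract argument that solutions to the combined Gorenstein-plus-gcd system always exist at heights $4$ and $5$), the proposal establishes the framework but not the proposition.

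One smaller imprecision: the Gorenstein condition does not just ``pin down $l^\infty$''; both components of $\bfl_{k-1}$ are determined, via $l_{k-1}^0=\cali_{\bfv_{k-1}}/\gcd(w_k^0+w_k^\infty,\cali_{\bfv_{k-1}})$ and $l_{k-1}^\infty=(w_k^0+w_k^\infty)/\gcd(w_k^0+w_k^\infty,\cali_{\bfv_{k-1}})$, so the only remaining freedom lies in the choice of $\bfw_k$ and $\bfv_k$ (constrained to give a quasi-regular $\eta$-Einstein ray) and in the residue class of the parameter $t$. Recognizing that this is where all the freedom sits is what makes the explicit search succeed.
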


\begin{proof}
The starting point is the one-parameter family of quasi-regular smooth, $7$-dimensional Sasaki-Einstein structures $\{\cals^t_3\}$, $t \in \bbz^+$, given by Equation \eqref{infse7man}. As discussed in \cite{BoTo18c}, these are obtained by iterating the join construction twice and the quasi-regular quotient is a stage $3$ Bott orbifold.
Now we observe - using Lemma 2.9 and (33), respectively from \cite{BoTo19a} - that for a given parameter $t\in\bbz^+$, we have
the orbifold order and Fano index, respectively
$$
\begin{array}{ccl}
\Upsilon_3 &=& 2^2\cdot3^2\cdot 17 \cdot (780300\, t^2 + 65790\, t+1387)(1020\,t+43)(255\,t+ 11),\\
\\
\cali_{\bfv_3}&=&13.
\end{array}
$$

Assuming that we choose the quasi-regular Sasaki Einstein structure $\cals^t_3$ such that the transverse K\"ahler class is primitive orbifold our
next step is to look for a smooth join $\cals^t_3 \star_{\bfl_4} S^3_{\bfw_4}$ with a quasi-regular $\eta$-Einstein ray in the $\bfw$-cone.
This is done experimentally using the machinery presented in e.g. Section 4 of \cite{BoTo19a}. Indeed, let $k=2$ in
$p^\pm(k)$ (with $d=3$) as in Lemma 4.1 of \cite{BoTo19a} with
$v_4^\infty/v_4^0=p^-(k)/p^+(k)$ and $w_4^\infty/w_4^0=p^-(k)/(kp^+(k))$. Then we get
$$
\begin{array}{ccl}
(v_4^0,v_4^\infty) & = & (49,26)\\
\\
(w_4^0,w_4^\infty) & = & (49,13).\\
\end{array}
$$
Setting
$$
\begin{array}{ccccl}
l_3^0 & = &\frac{\cali_{\bfv_{3}}}{\gcd(w_4^0+w_4^\infty,\cali_{\bfv_{3}})}&=&  13\\
\\
l_3^\infty & = & \frac{w_4^0+w_4^\infty}{\gcd(w_4^0+w_4^\infty,\cali_{\bfv_{3}})}&=&  62\\
\end{array}
$$
we know that $(v_4^0,v_4^\infty)$ represents a quasi-regular $\eta$-Einstein ray in the $\bfw$-cone of $\cals^t_3 \star_{\bfl_4} S^3_{\bfw_4}$.
Further, $\cals^t_3 \star_{\bfl_4} S^3_{\bfw_4}$ will be smooth exactly when
$\gcd(l_3^\infty \Upsilon_3, l_3^0 w_4^0 w_4^\infty)=1$, i.e., exactly when
$$\gcd(2 \cdot 3 \cdot 17 \cdot 31 \cdot (780300\, t^2 + 65790\, t+1387)(1020\,t+43)(255\,t+ 11), 7 \cdot 13 )=1,$$
which in turn is equivalent to
$$\gcd((780300\, t^2 + 65790\, t+1387)(1020\,t+43)(255\,t+ 11), 7 \cdot 13 )=1.$$
Noting that $1387=19\cdot 73$, $43$, and $11$ are all coprime with $7 \cdot 13$, we realize that this is satisfied if
we assume $t=7\cdot 13\cdot \hat t$ with $\hat t \in \bbz^+$. With these choices we now have a one-parameter family of quasi-regular smooth, $9$-dimensional Sasaki-Einstein structures $\{\cals^{\hat t}_4\}$, $\hat t \in \bbz^+$ and we can calculate that

$$
\begin{array}{ccl}
\Upsilon_4 &=& 2^4\cdot3^2\cdot 7^2 \cdot 13 \cdot 17 \cdot 31 \cdot (6461664300\, {\hat t}^2+ 5986890\, {\hat t} +1387)(92820\,{\hat t}+43)
(23205\,{\hat t}+ 11)\\
\\
\cali_{\bfv_4}&=&150.
\end{array}
$$
Assuming that we choose the quasi-regular Sasaki Einstein structure $\cals^{\hat t}_4$ such that the transverse K\"ahler class is a primitive orbifold class, our
next step is to look for a smooth join $\cals^{\hat t}_4 \star_{\bfl_4} S^3_{\bfw_5}$ with a quasi-regular $\eta$-Einstein ray in the $\bfw$-cone.
This is done experimentally using the same machinery as above. After much trial and error and some instinctive avoidance of those prime numbers that have already arrived at the scene, we proceed with $k=37/5$ in
$p^\pm(k)$ (with $d=4$) as in Lemma 4.1 of \cite{BoTo19a} with
$v_5^\infty/v_5^0=p^-(k)/p^+(k)$ and $w_5^\infty/w_5^0=p^-(k)/(kp^+(k))$, to get
$$
\begin{array}{ccl}
(v_5^0,v_5^\infty) & = & (3498805,834997)\\
\\
(w_5^0,w_5^\infty) & = & (25891157,834997).\\
\end{array}
$$
Now we set
$$
\begin{array}{ccccl}
l_4^0 & = &\frac{\cali_{\bfv_{4}}}{\gcd(w_5^0+w_5^\infty,\cali_{\bfv_{4}})}&=& 25  \\
\\
l_4^\infty & = & \frac{w_5^0+w_5^\infty}{\gcd(w_5^0+w_5^\infty,\cali_{\bfv_{4}})}&=& 7 \cdot 13 \cdot 31 \cdot 1579 \\
\end{array}
$$
(where the factors displayed for $l_4^\infty$ are all prime).
Then $(v_5^0,v_5^\infty)$ represents a quasi-regular $\eta$-Einstein ray and the join $\cals^{\hat t}_4 \star_{\bfl_4} S^3_{\bfw_5}$ will be smooth exactly when
$$\gcd(l_4^\infty\Upsilon_4, l_4^0 w_5^0 w_5^\infty)=1.$$
Now, one can calculate that the left side of this equation equals
$$\gcd((6461664300\, {\hat t}^2+ 5986890\, {\hat t} +1387)(92820\,{\hat t}+43)
(23205\,{\hat t}+ 11), 5^2\cdot 29\cdot 37 \cdot 28793 \cdot 699 761)$$
and since $\gcd(1387\cdot 43 \cdot 11, 5^2\cdot 29\cdot 37 \cdot 28793 \cdot 699 761)=1$, we can
make sure $\gcd(l_4^\infty\Upsilon_4, l_4^0 w_5^0 w_5^\infty)=1$ by simply assuming that
$$\hat t = 5^2\cdot 29\cdot 37 \cdot 28793 \cdot 699 761\, \tilde{t},$$
for some $\tilde t \in \bbz^+$. We thus arrive at a one-parameter family of quasi-regular smooth, $11$-dimensional Sasaki-Einstein structures $\{\cals^{\tilde t}_5\}$, $\tilde t \in \bbz^+$.
\end{proof}

It seems very likely that one can repeat this process in perpetuity (without resorting to products), but it is not clear how to show this explicitly and it was certainly more complicated to arrive at $\{\cals^{\tilde t}_5\}$ than at any of the previous stages. Nevertheless, we propose

\begin{conjecture}\label{itjoinseconj}
In the Gorenstein case the iterated $S^3$ joins admit smooth Sasaki-Einstein manifolds in all odd dimensions.
\end{conjecture}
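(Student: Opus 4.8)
The plan is to prove Conjecture \ref{itjoinseconj} by induction on the height $k$, continuing the construction of Proposition \ref{911prop} but replacing the ``trial and error'' used there by a uniform number-theoretic argument. I would strengthen the statement to a quantitative inductive hypothesis: for each $k\ge 2$ there is a one-parameter family $\{\cals^t_k\}_{t\in\bbz^+}$ of quasi-regular smooth Sasaki--Einstein iterated joins $M^{2k+1}_{\bfl,\bfw}$ such that (i) the transverse K\"ahler class is a primitive orbifold class; (ii) the Fano index $\cali_{\bfv_k}$ is a fixed positive integer, independent of $t$; and (iii) the orbifold order $\Upsilon_k=\Upsilon_k(t)$ is a product of polynomials in $t$ with integer coefficients whose constant terms are coprime to a fixed finite set $\mathsf P_k$ of primes containing every prime dividing $\cali_{\bfv_k}$ and every prime occurring in $\bfl$ or $\bfw$ at the earlier stages. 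The $Y^{p,q}$ give the base case $k=2$, and Proposition \ref{911prop} supplies $k=3,4,5$; in particular the families there all have $\Upsilon_k(t)$ of exactly this shape, with constant-term factors such as $1387=19\cdot 73$, $43$, $11$ that are visibly coprime to the ``spent'' primes.

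For the inductive step I would form the join $\cals^t_k\star_{\bfl_k}S^3_{\bfw_{k+1}}$ just as in that proof: choose, via Lemma~4.1 of \cite{BoTo19a}, a weight vector $(w_{k+1}^0,w_{k+1}^\infty)$ together with a quasi-regular $\eta$-Einstein ray $(v_{k+1}^0,v_{k+1}^\infty)$ in the $\bfw$-cone; the Gorenstein normalization then forces $l_k^0=\cali_{\bfv_k}/\gcd(w_{k+1}^0+w_{k+1}^\infty,\cali_{\bfv_k})$ and $l_k^\infty=(w_{k+1}^0+w_{k+1}^\infty)/\gcd(w_{k+1}^0+w_{k+1}^\infty,\cali_{\bfv_k})$. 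By Lemma \ref{smoothlem} the join is a smooth manifold exactly when $\gcd\bigl(l_k^\infty\Upsilon_k(t),\,l_k^0 w_{k+1}^0 w_{k+1}^\infty\bigr)=1$. The primes dividing $l_k^0 w_{k+1}^0 w_{k+1}^\infty$ are now fixed; let $N_{k+1}$ be their product. By (iii) the factors of $\Upsilon_k(t)$ have constant term coprime to each of these primes, so substituting $t=N_{k+1}\tilde t$ forces $\gcd(\Upsilon_k(N_{k+1}\tilde t),N_{k+1})=1$ for every $\tilde t\in\bbz^+$; and if the ray was chosen so that $\gcd(l_k^\infty,\,l_k^0 w_{k+1}^0 w_{k+1}^\infty)=1$, the whole gcd is $1$. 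This produces the family $\{\cals^{\tilde t}_{k+1}\}$ of smooth quasi-regular Sasaki--Einstein manifolds of dimension $2k+3$.

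One then checks that (i)--(iii) propagate. Primitivity of the new transverse K\"ahler class is imposed exactly as in Proposition \ref{911prop} (one chooses the quasi-regular Reeb field in the $\eta$-Einstein ray so that the transverse K\"ahler class is primitive orbifold). The new Fano index $\cali_{\bfv_{k+1}}$ is a fixed integer, read off from equation~(33) of \cite{BoTo19a}. By \eqref{sasorder} one has $\Upsilon_{k+1}=\lcm(m_{k+1}^0,m_{k+1}^\infty)\,\Upsilon_k$ with $m_{k+1}=l_k^\infty/s_{k+1}$ and $s_{k+1}=\gcd\bigl(l_k^\infty,\,|w_{k+1}^0 v_{k+1}^\infty-w_{k+1}^\infty v_{k+1}^0|\bigr)$; since $l_k^\infty,\bfv_{k+1},\bfw_{k+1}$ are now fixed integers, $\Upsilon_{k+1}(\tilde t)=\lcm(m_{k+1}^0,m_{k+1}^\infty)\,\Upsilon_k(N_{k+1}\tilde t)$ is again a product of integer polynomials in $\tilde t$, and the substitution only rescales the inner variable, so it cannot destroy the ``constant term coprime to'' property for any prime not already among the newly spent ones. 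Setting $\mathsf P_{k+1}=\mathsf P_k\cup\{\text{primes dividing }N_{k+1}\cali_{\bfv_{k+1}}\}$ closes the induction, and as $k$ runs through $2,3,4,\ldots$ the dimension $2k+1$ exhausts the odd integers $\ge 5$, proving the conjecture.

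The hard part is the clause ``if the ray was chosen so that $\gcd(l_k^\infty,\,l_k^0 w_{k+1}^0 w_{k+1}^\infty)=1$'': one must show that at \emph{every} stage, among the infinitely many admissible weight vectors furnished by Lemma~4.1 of \cite{BoTo19a}, there is always one for which $l_k^\infty$ (equivalently, the part of $w_{k+1}^0+w_{k+1}^\infty$ not absorbed into $\cali_{\bfv_k}$) is coprime both to $\cali_{\bfv_k}$ and to $w_{k+1}^0 w_{k+1}^\infty$, while still satisfying the positivity and rationality conditions that make the $\eta$-Einstein ray quasi-regular and lie in the $\bfw$-cone. In small dimensions this is settled by a finite search --- the authors say as much --- but an unconditional proof would need a Dirichlet/density-type statement about the image of the $p^{\pm}$ construction, together with control on how fast $\cali_{\bfv_k}$ and $\deg_t\Upsilon_k$ grow with $k$. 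Everything else in the scheme is the routine propagation of the arithmetic bookkeeping. (A non-constructive alternative would be to argue abstractly that the set of smooth Sasaki--Einstein iterated joins is infinite in each odd dimension; no such argument is presently known either.)
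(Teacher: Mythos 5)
The statement you are trying to prove is labelled a \emph{conjecture} in the paper, and the paper offers no proof of it: the authors establish the existence of infinitely many smooth quasi-regular Sasaki--Einstein iterated joins only through dimension $11$ (Proposition \ref{911prop}) and explicitly remark that ``it is not clear how to show this explicitly'' in general. So there is no proof in the paper to compare yours against; the only question is whether your proposal closes the gap the authors left open. It does not, and you say so yourself in your final paragraph. Your scheme is the natural formalization of the induction implicit in Proposition \ref{911prop} --- iterate the join, impose the Gorenstein normalization on $\bfl_k$, use Lemma \ref{smoothlem} to reduce smoothness to a gcd condition, and kill the polynomial part of $\gcd(\Upsilon_k(t), l_k^0 w_{k+1}^0 w_{k+1}^\infty)$ by the substitution $t=N_{k+1}\tilde t$ --- but the load-bearing step is precisely the one you flag as ``the hard part'': showing that at every stage the $p^\pm$ construction of Lemma 4.1 of \cite{BoTo19a} produces an admissible weight vector $\bfw_{k+1}$, with a quasi-regular $\eta$-Einstein ray in its $\bfw$-cone, whose associated primes avoid an ever-growing finite set. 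Absent a Dirichlet/density-type theorem about the arithmetic of that construction, this selection cannot be guaranteed, and no such theorem is proved or cited. A strategy with an acknowledged unproven selection lemma at its core is not a proof.

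Two further points on the bookkeeping itself. First, your inductive hypothesis (iii) guarantees that the constant terms of the polynomial factors of $\Upsilon_k(t)$ are coprime to the \emph{previously} spent primes $\mathsf P_k$, but the substitution argument needs them coprime to the \emph{newly} introduced modulus $N_{k+1}$ (the primes of $l_k^0 w_{k+1}^0 w_{k+1}^\infty$); in Proposition \ref{911prop} this is verified by hand ($\gcd(1387\cdot 43\cdot 11,\,5^2\cdot 29\cdot 37\cdot 28793\cdot 699761)=1$), and in general it is exactly the same unproven avoidance statement in another guise, since one can only arrange it by choosing $\bfw_{k+1}$ suitably. Second, by \eqref{sasorder} the orbifold order $\Upsilon_k$ accumulates a fixed integer prefactor $\prod_i \lcm(m_i^0,m_i^\infty)w_1^0w_1^\infty$ that grows without bound in $k$, and the smoothness condition of Lemma \ref{smoothlem} also requires $l_k^0 w_{k+1}^0 w_{k+1}^\infty$ to be coprime to \emph{that} prefactor, not only to the polynomial factors; your hypothesis (iii) as written does not track this, so even the formal propagation of the induction is incomplete. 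None of this makes your outline wrong as a plan --- it is essentially the plan the authors themselves suggest --- but it remains a conjecture, not a theorem.
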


It seems reasonable that an analogous result should hold in the general cscS case, although this would be even more complex.

\end{example}

\def\cprime{$'$} \def\cprime{$'$} \def\cprime{$'$} \def\cprime{$'$}
  \def\cprime{$'$} \def\cprime{$'$} \def\cprime{$'$} \def\cprime{$'$}
  \def\cdprime{$''$} \def\cprime{$'$} \def\cprime{$'$} \def\cprime{$'$}
  \def\cprime{$'$}
\providecommand{\bysame}{\leavevmode\hbox to3em{\hrulefill}\thinspace}
\providecommand{\MR}{\relax\ifhmode\unskip\space\fi MR }
\providecommand{\MRhref}[2]{%
  \href{http://www.ams.org/mathscinet-getitem?mr=#1}{#2}
}
\providecommand{\href}[2]{#2}

\end{document}